\theoremstyle{definition}
\newtheorem{theorem}{Theorem}
\newtheorem{lemma}[theorem]{Lemma}
\newtheorem{defn}{Definition}
\newtheorem{prop}[theorem]{Proposition}
\newtheorem{example}[theorem]{Example}
\newtheorem{remark}[theorem]{Remark}
\newtheorem{question}[theorem]{Question}
\tikzset{every tree node/.style={align=center,anchor=north}}
\newcommand{\myg}{\,\underline{\ }\,}
\newcommand{\HC}{\mathrm{HC}}
\newcommand{\lsp}{\mathrm{LS}}
\newcommand{\lsvec}{\mathrm{V}}
\newcommand{\cs}{\mathrm{CS}}
\newcommand{\scs}{\mathrm{SCS}}
\newcommand{\msvec}{\overline{V}}
\newcommand{\sud}{\mathrm{Sud}}
\DeclareMathOperator{\lcm}{lcm}
\newcommand{\bu}{\boldsymbol{u}}
\newcommand{\bv}[2]{\bu^{#1}\bu^{#2}}
\newcommand{\boldv}{\boldsymbol{v}}
\newcommand{\bw}{\boldsymbol{w}}
\newcommand{\ba}{\boldsymbol{a}}
\newcommand{\bb}{\boldsymbol{b}}
\newcommand{\br}{\boldsymbol{r}}
\newcommand{\bc}{\boldsymbol{c}}
\newcommand{\bx}{\boldsymbol{x}}
\newcommand{\by}{\boldsymbol{y}}
\newcommand{\be}{\boldsymbol{e}}
\newcommand{\boldf}{\boldsymbol{f}}
\def\Big#1{\makebox(0,0){\huge#1}}
\newcommand{\csop}{\mathrm{cs}}
\newcommand{\scsop}{\mathrm{scs}}
\def\imod#1{\allowbreak\mkern10mu({\operator@font mod}\,\,#1)}
\begin{document}

\title{Orthogonal bases for transportation polytopes applied to latin squares, magic squares and Sudoku boards}

\date{\today}

\author{Gregory S. Warrington}
\address{Dept. of Mathematics and Statistics\\
  University of Vermont \\
  Burlington, VT 05401}
\email{gregory.warrington@uvm.edu}

\thanks{This work was partially supported by a grant from the Simons
 Foundation (\#429570).}

\vspace*{.3in}

\begin{abstract}
  We give a simple construction of an orthogonal basis for the space
  of $m\times n$ matrices with row and column sums equal to zero. This
  vector space corresponds to the affine space naturally associated
  with the Birkhoff polytope, contingency tables and Latin squares. We
  also provide orthogonal bases for the spaces underlying
  magic squares and Sudoku boards. Our construction combines the
  outer (i.e., tensor or dyadic) product on vectors with certain
  rooted, vector-labeled, binary trees. Our bases naturally respect
  the decomposition of a vector space into centrosymmetric and
  skew-centrosymmetric pieces; the bases can be easily modified to
  respect the usual matrix symmetry and skew-symmetry as well.
\end{abstract}

\keywords{orthogonal basis, transportation polytope, Birkhoff
  polytope, contingency table, Latin square, Magic square, Sudoku}

\subjclass[2010]{Primary 52B12; Secondary 05B15, 15A03}

\maketitle


\section{Introduction}
\label{sec:intro}

Matrices with specified row and column sums arise in various contexts
in mathematics: in the definition of the Birkhoff polytope; as
statistical contingency tables; and as Latin squares, magic squares
and Sudoku boards. In this paper, we give simple, explicit
linear-algebraic constructions of orthogonal bases for the vector
spaces underlying these families. We note from the outset that there
are obvious bases for these spaces (see Section~\ref{sec:future}) that
can be orthogonalized by, say, the Gram-Schmidt process. However, we
doubt there is a simple closed-form description of the matrices
resulting from such a process. In addition, our approach has the
advantage of yielding basis vectors that respect natural
decompositions of these vector spaces under various symmetries.

For $m,n\geq 2$, let $\lsvec_{m,n}$ be the $(m-1)(n-1)$-dimensional
subspace of matrices $(x_{ij})$ in $\mathbb{R}^{mn}$ subject to the
$m$ requirements that the row sums are zero and $n$ requirements that
the column sums are zero:
\begin{equation}\label{eq:vmn}
  \sum_{k=1}^n x_{ik} = 0 = \sum_{k=1}^m x_{kj},\quad 1\leq i\leq m,\ 1\leq j\leq n. 
\end{equation}
Our main theorem in this paper is an explicit orthogonal basis for
$\lsvec_{m,n}$. In Section~\ref{sec:vecs} we define for each $k\geq 2$
a set $U(k)$ of $k-1$ vectors. Each element of our basis for
$\lsvec_{m,n}$ can be written as an outer product of an element of
$U(m)$ with an element of $U(n)$. (Recall that the \emph{outer
  product} $\bu \bw$ of $\bu = (u_1,\ldots,u_m)$ with $\bw =
(w_1,\ldots,w_n)$ is the $m\times n$ matrix whose $(i,j)$-th entry is
$u_iw_j$.)

\begin{theorem}\label{thm:main}
  The set of matrices $B_{m,n} = \{\bu\bu':\, \bu\in U(m),\ \bu'\in
  U(n)\}$ provides an orthogonal basis for $\lsvec_{m,n}$.
\end{theorem}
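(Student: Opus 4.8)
The plan is to reduce Theorem~\ref{thm:main} to three properties of the vector sets $U(k)$ that the construction in Section~\ref{sec:vecs} is designed to supply: for every $k\ge 2$, (i) $|U(k)| = k-1$; (ii) each $\bu\in U(k)$ is nonzero and satisfies $\sum_{i=1}^k u_i = 0$; and (iii) any two distinct elements of $U(k)$ are orthogonal. Equivalently, $U(k)$ is an orthogonal basis for the hyperplane $H_k := \{\boldv\in\RR^k : \sum_i v_i = 0\}$. Granting these, the theorem becomes a short and routine argument, which I describe next; the conceptual content is simply that $\lsvec_{m,n}$ is, under the identification of $\RR^{mn}$ with $\RR^m\otimes\RR^n$, exactly $H_m\otimes H_n$, and that the outer products of an orthogonal basis of $H_m$ with one of $H_n$ form an orthogonal basis of that tensor product.

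First I would check $B_{m,n}\subseteq\lsvec_{m,n}$: for $\bu\in U(m)$ and $\bu'\in U(n)$, the $(i,j)$-entry of $\bu\bu'$ is $u_iu'_j$, so its $i$-th row sum is $u_i\sum_{j=1}^n u'_j = 0$ and its $j$-th column sum is $u'_j\sum_{i=1}^m u_i = 0$ by~(ii); hence $\bu\bu'$ satisfies~\eqref{eq:vmn}. Next, equipping $\RR^{mn}$ with the Frobenius inner product $\langle M,N\rangle = \sum_{i,j}M_{ij}N_{ij}$, a direct computation gives, for $\bu,\boldv\in U(m)$ and $\bu',\boldv'\in U(n)$,
\[
  \langle \bu\bu',\,\boldv\boldv'\rangle \;=\; \sum_{i=1}^m\sum_{j=1}^n (u_iu'_j)(v_iv'_j) \;=\; \left(\sum_{i=1}^m u_iv_i\right)\!\left(\sum_{j=1}^n u'_jv'_j\right) \;=\; \langle\bu,\boldv\rangle\,\langle\bu',\boldv'\rangle .
\]
By~(iii) this vanishes whenever $\bu\ne\boldv$ or $\bu'\ne\boldv'$, and by~(ii) it equals $\|\bu\|^2\|\bu'\|^2 > 0$ when $\bu=\boldv$ and $\bu'=\boldv'$. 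In particular the assignment $(\bu,\bu')\mapsto\bu\bu'$ is injective on $U(m)\times U(n)$ (if $\bu\bu'=\boldv\boldv'$ with $(\bu,\bu')\ne(\boldv,\boldv')$, then $\langle\bu\bu',\boldv\boldv'\rangle$ would be both $\|\bu\bu'\|^2>0$ and, by orthogonality, $0$), so $|B_{m,n}| = |U(m)|\cdot|U(n)| = (m-1)(n-1)$ by~(i), and the elements of $B_{m,n}$ are nonzero and pairwise orthogonal, hence linearly independent. Since $\lsvec_{m,n}$ has dimension $(m-1)(n-1)$, a linearly independent subset of that size is a basis, so $B_{m,n}$ is an orthogonal basis for $\lsvec_{m,n}$.

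The main obstacle, therefore, is not in this argument but in establishing properties~(i)--(iii) for $U(k)$ from its recursive, tree-based definition — in particular the orthogonality~(iii). I expect that to be proved by induction following the binary-tree construction, where the delicate point is showing that vectors associated to different parts of the tree (and so assembled from outer products over disjoint index blocks) remain orthogonal, while the zero-sum condition~(ii) and the count~(i) should fall out of the same induction. If Section~\ref{sec:vecs} already packages ``$U(k)$ is an orthogonal basis of $H_k$'' as a lemma, then Theorem~\ref{thm:main} is essentially a corollary of it together with the general tensor-product principle noted above.
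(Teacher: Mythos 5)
Your proposal is correct and matches the paper's approach: containment $B_{m,n}\subseteq\lsvec_{m,n}$ via the zero-sum property of the $\bu$'s, pairwise orthogonality via the factorization $\langle\bu\bu',\boldv\boldv'\rangle=\langle\bu,\boldv\rangle\langle\bu',\boldv'\rangle$, and a dimension count against $\dim\lsvec_{m,n}=(m-1)(n-1)$ to conclude. The one small departure is your injectivity argument — concluding $|B_{m,n}|=(m-1)(n-1)$ directly from the orthogonality identity (a nonzero matrix cannot be orthogonal to itself), whereas the paper compares matching rows to deduce first $\bu^j=\bu^\ell$ and then $\bu^i=\bu^k$ — yours is a slight streamlining of the same underlying use of orthogonality, and the three properties of $U(k)$ you defer to Section~\ref{sec:vecs} are indeed supplied there by Lemma~\ref{lem:tree}, Lemma~\ref{lem:ufacts}, and Proposition~\ref{prop:uorthog}.
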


\begin{example}
  In Section~\ref{sec:vecs} it is shown that $U(3) = \{\bu^1 =
  (1,-2,1), \bu^2 = (1,0,-1)\}$. By Theorem~\ref{thm:main}, the
  following four matrices thereby form an orthogonal basis for
  $\lsvec_{3,3}$.  {\small
  \begin{equation*}
    \bv{1}{1} =
    \begin{bmatrix}
      1 & -2 & 1\\
      -2 & 4 & -2\\
      1 & -2 & 1
    \end{bmatrix}, \quad 
    \bv{1}{2} =
    \begin{bmatrix}
      1 & 0 & -1\\
      -2 & 0 & 2\\
      1 & 0 & -1
    \end{bmatrix},  \quad 
    \bv{2}{1} =
    \begin{bmatrix}
      1 & -2 & 1\\
      0 & 0 & 0\\
      -1 & 2 & -1
    \end{bmatrix},  \quad 
    \bv{2}{2} = 
    \begin{bmatrix}
      1 & 0 & -1\\
      0 & 0 & 0\\
      -1 & 0 & 1
    \end{bmatrix}.
  \end{equation*}
  }
\end{example}


The structure of the paper is as follows. In
Section~\ref{sec:transport} we unify the objects being studied under
the umbrellas of transportation polytopes and inside-out polytopes. We
also explain the simple shift from the relevant affine space
containing the objects of interest to the linear subspace
$\lsvec_{m,n}$. In Section~\ref{sec:vecs} we define the sets $U(n)$
and prove Theorem~\ref{thm:main}. In Sections~\ref{sec:magic}
and~\ref{sec:sudoku} we modify our construction of $B_{m,n}$ so as to
provide analogous bases for vector spaces of magic squares and Sudoku
boards, respectively. In Section~\ref{sec:symm} we introduce simple
variations of our bases that respect usual matrix symmetry and
skew-symmetry. Finally, in Section~\ref{sec:future} we explore the
connection to a well-known non-orthogonal basis for $\lsvec_{m,n}$ and
mention some possible directions for further study.

In order to avoid clutter, we will periodically utilize the following
notations: denoting negative numbers by placing a bar of the number;
replacing zeros with underscores; and omitting parentheses and commas
from vectors.

\section{Transportation and inside-out polytopes}
\label{sec:transport}

The examples mentioned in the beginning of the Introduction are
unified by the concept of a transportation (or, transport)
polytope. Transportation polytopes have long been studied in the
fields of statistics, mathematical programming and geometry (see the
following references for different overviews:
~\cite{Diaconis-Gangolli,TPbook,DeLoera}). Let $m,n\geq 1$ and $\br =
(r_1,\ldots,r_m)$, $\bc = (c_1,\ldots,c_n)$ be two vectors of
nonnegative real entries such that $\sum_{k=1}^m r_k = \sum_{k=1}^n
c_k$. The vectors $\br$ and $\bc$ are called \emph{marginals} (or
\emph{margins} or \emph{$1$-marginals}). As in Pak~\cite{Pak}, we
define the \emph{transportation polytope $T(\br,\bc)$} to be the set
of $m\times n$ matrices $(x_{ij})$ over $\mathbb{R}$ satisfying:
\begin{itemize}
\item $x_{ij} \geq 0$ for all $1\leq i,j\leq n$,
\item $\sum_{k=1}^n x_{ik} = r_i$, $1\leq i\leq m$, and
\item $\sum_{k=1}^m x_{kj} = c_j$, $1\leq j\leq n$.
\end{itemize}
The special case of $m=n$ and $\br=\bc=(1,1,\ldots,1)$ is known as the
\emph{Birkhoff polytope} or the \emph{polytope of doubly stochastic
  matrices}. The Birkhoff polytope is one of the most fundamental
of polytopes --- see, e.g.~\cite{Ziegler}.

In many cases, one is interested solely in the lattice points
$\mathbb{Z}^{mn}$ lying in a given polytope. For example, in
statistics a \emph{contingency table} is such a lattice point in the
transportation polytope $T(\br,\bc)$ in which the marginals consist of
integers. Such tables are used to describe the distribution of a
population over two variables. Knowledge of all such integer-lattice
points satisfying a given set of marginals is useful in statistical
tests for significance~\cite{Diaconis-Gangolli}.

There are a number of examples where one is interested in only a
subset of the lattice points lying in a polytope. The cases discussed
in this paper are the following (take $m=n$).
\begin{itemize}
\item Let $\br=\bc=(\binom{n+1}{2},\ldots,\binom{n+1}{2})$. An
  \emph{order-$n$ Latin square} is an element of $\lsp_n = T(\br,\bc)$
  for which each row and column is a permutation of
  $\{1,2,\ldots,n\}$.

\item Let $S\in \mathbb{R}_{\geq 0}$. For $\br=\bc=(S,S,\ldots,S)$ an
  \emph{order-$n$ semi-magic square with magic-sum $S$} is an element
  of $T(\br,\bc)$ for which all $n^2$ entries are distinct. If both
  main-diagonal sums also equal $S$, then such a matrix is not merely
  \emph{semi-magic} but also \emph{magic}. If the entries are
  $\{1,2,\ldots,n^2\}$ (and hence $S = n^2(n^2+1)/2$) then the square
  is said to be \emph{normal} (terminology varies).

\item An $n^2\times n^2$ matrix naturally decomposes into $n^2$
  submatrices, each of size $n\times n$, that simultaneously tile the
  entire grid. An \emph{order-$n^2$ Sudoku board} is an order-$n^2$
  Latin square with one additional property: The entries in
  each of these $n^2$ submatrices must also be a permutation of
  $\{1,2,\ldots,n^2\}$.
\end{itemize}

There is an expansive literature on these families and their many
variations. We direct the reader to~\cite[\S III]{DinitzHandbook} for
an overview of Latin squares and~\cite{BeforeSudoku} for an overview
of Magic squares and Sudoku.

In each of these three cases, the subset of lattice points of interest
can be characterized in a particularly simple way. Recall that a
\emph{hyperplane arrangement} is a finite collection of hyperplanes in
a vector space. A polytope in conjunction with a hyperplane
arrangement is known as an \emph{inside-out polytope}. We can describe
the set of order-$n$ Latin square using an inside-out polytopes as
follows. Denote the $n^2$-dimensional hypercube with sides $1\leq
x_{ij} \leq n$ by $\HC_n$. Let $A_n$ denote the hyperplane arrangement
\begin{equation*}
  \cup_{\substack{i,j,k=1\\j\neq k}}^n \{x_{ij} = x_{ik}\} \bigcup \cup_{\substack{i,j,k=1\\i\neq j}}^n \{x_{ik} = x_{jk}\}.
\end{equation*}
Then the set of order-$n$ Latin squares is the set of those lattice
points lying in the polytope $\lsp_n\cap \HC_n$ and avoiding the
hyperplane arrangement $A_n$. The incorporation of $\HC_n$ in the
definition ensures that all coordinates are between $1$
and $n$. Avoidance of $A_n$ thereby ensures by the pigeonhole principle
that each row and column is a permutation of $\{1,2,\ldots,n\}$.

By suitably modifying the hyperplanes and polytopes considered, Sudoku
boards and magic squares can also be realized as lattice points in
inside-out polytopes. We omit the details and refer the reader
to~\cite{InsideOut} for the general theory of inside-out polytopes.


We now discuss the relationship between the affine spaces in which our
polytopes typically lie and the associated linear subspaces of
$\mathbb{R}^{n^2}$. For concreteness, we focus on Latin squares. We
begin with a well-known fact stated in the introduction. (Proofs can
be found in, for example,~\cite[Lemma 2.3]{DeLoera}
and~\cite{MagicDim}); we include a proof here only for completeness.)

\begin{prop}\label{prop:dim}
  For $m,n\geq 1$, the dimension of $\lsvec_{m,n}$ is $(m-1)(n-1)$.
\end{prop}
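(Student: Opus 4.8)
The plan is to show that the $m+n$ linear constraints defining $\lsvec_{m,n}$ in~\eqref{eq:vmn} have rank exactly $m+n-1$, so that the solution space has dimension $mn - (m+n-1) = (m-1)(n-1)$. The proof naturally splits into two halves: a lower bound on the dimension (equivalently, an upper bound on the rank of the constraint system) and a matching upper bound.

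For the upper bound on the rank, I would observe that the constraints are not independent: the sum of all $m$ row-sum equations equals the sum of all $n$ column-sum equations (both compute $\sum_{i,j} x_{ij}$), giving one linear dependence among the $m+n$ functionals. Hence the rank is at most $m+n-1$, so $\dim \lsvec_{m,n} \geq mn - (m+n-1) = (m-1)(n-1)$. For the lower bound on the rank (equivalently, showing no further dependencies), I would exhibit a matrix in $\RR^{mn}$ satisfying all but one of the constraints, or more directly, argue that any $m+n-1$ of the functionals — say, all row sums and all but the last column sum — are linearly independent. This can be seen by a direct dimension count on an explicitly parametrized complement: choose the entries $x_{ij}$ for $1 \leq i \leq m-1$, $1 \leq j \leq n-1$ freely, and show the remaining entries (last row, last column) are then uniquely determined by the row- and column-sum conditions, with all constraints satisfiable. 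Concretely, set $x_{in} = -\sum_{k=1}^{n-1} x_{ik}$ for $i < m$, set $x_{mj} = -\sum_{k=1}^{m-1} x_{kj}$ for $j < n$, and set $x_{mn} = \sum_{i=1}^{m-1}\sum_{j=1}^{n-1} x_{ij}$; one then checks the last row sum and last column sum are automatically zero, again using $\sum_{i,j} x_{ij} = 0$.

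This exhibits a linear isomorphism between $\lsvec_{m,n}$ and $\RR^{(m-1)(n-1)}$ (the free choice of the upper-left $(m-1)\times(n-1)$ block), which simultaneously proves both bounds and hence the equality $\dim \lsvec_{m,n} = (m-1)(n-1)$. The degenerate cases $m=1$ or $n=1$ should be checked separately but are immediate: if $n=1$ the column-sum conditions force $x_{i1}=0$ for all $i$, giving the zero space of dimension $0 = (m-1)\cdot 0$.

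I do not expect any serious obstacle here; the only point requiring a moment's care is verifying that the "automatically satisfied" last row-sum and last column-sum conditions really do follow from the others, which is exactly the single dependency $\sum_i r_i = \sum_j c_j$ observed above. The argument is entirely elementary linear algebra, and indeed the references cited (\cite[Lemma 2.3]{DeLoera}, \cite{MagicDim}) contain essentially this computation.
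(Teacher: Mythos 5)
Your proof is correct and takes essentially the same approach as the paper: both identify the single linear dependency (sum of row-sum constraints equals sum of column-sum constraints) and then argue that the remaining $m+n-1$ constraints are independent, yielding $\dim \lsvec_{m,n} = mn-(m+n-1) = (m-1)(n-1)$. The only cosmetic difference is that the paper verifies independence via a pivot-column (row-echelon) argument after deleting one column constraint, while you verify it via the dual perspective of exhibiting the upper-left $(m-1)\times(n-1)$ block as an explicit set of free parameters; these are equivalent.
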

\begin{proof}
  $\lsvec_{m,n}$ is a subspace of $\mathbb{R}^{mn}$ that is defined by
  $m+n$ linear equations: $m$ of the form $\sum_{k} x_{ik} = 0$ and
  $n$ of the form $\sum_k x_{kj} = 0$. The sum of the first $m$
  expressions equals the sum of the last $n$ expressions, so we know there
  is a dependency. However, if we omit the $(m+1)$-st equation and order
  the $x_{ij}$ lexicographically, we see that the pivot columns of the
  remaining $m+n-1$ equations are distinct. It follows that
  $\dim(\lsvec_{m,n}) = mn-(m+n-1)$ as desired.
\end{proof}

We have realized Latin squares as points in an $n^2$-dimensional
vector space. However, Latin squares actually live in a proper
subspace. By definition, the polytope $\lsp_n$ consists of points
lying on the $2n$ hyperplanes determined by the marginals. By the
argument of Proposition~\ref{prop:dim}, there are exactly $2n-1$
\emph{independent} conditions among these requirements. It follows
that the polytope $\lsp_n$, and hence order-$n$ Latin squares, live
inside a $n^2-(2n-1) = (n-1)^2$-dimensional affine subspace of
$\mathbb{R}^{n^2}$.

The space $\lsvec_{n,n}$ does not contain any Latin squares as the row
and column sums of elements in $\lsvec_{n,n}$ are forced to be
zero. However, suitable analogues of Latin squares that live in this
vector space are given as follows. Note that $1 + 2 + \cdots + n =
\binom{n+1}{2}$. So if we subtract $\frac{1}{n}\binom{n+1}{2} =
\frac{n+1}{2}$ from each element of a normal Latin square, we get a
matrix in which each row and column is a permutation
of $\{i-\frac{n+1}{2}:\, 1\leq i\leq n\}$. These will be termed
\emph{zeroed Latin squares}.

\begin{remark}
  A similar procedure can be used to translate the
  affine subspace containing an arbitrary transportation polytope to a
  linear subspace of $\lsvec_{m,n}$. If the marginals are initially $\br$
  and $\bc$, then the corresponding translated polytope imposes the
  requirement that $x_{ij} \geq -r_ic_j/(\sum_{i=1}^m r_i)$ rather
  than $x_{ij} \geq 0$.
\end{remark}

It will be useful in our discussion of magic squares in
Section~\ref{sec:magic} to augment $\lsvec_{n,n}$. Let $J_n$ be the
$n\times n$ matrix of all $1$'s. Then the affine space containing
$\lsp_n$ (and hence order-$n$ Latin squares) is an affine subspace of
the $((n-1)^2+1)$-dimensional vector space 
$\langle J_n\rangle \oplus \lsvec_{n,n}$. Note that in this space,
Latin squares will have the extra coordinate equal to $\frac{n+1}{2}$.

\section{Orthogonal bases}
\label{sec:vecs}

For each $n\geq 2$ we will define a set of $n-1$ mutually orthogonal
vectors $U(n) = \{\bu^1,\bu^2,\ldots,\bu^{n-1}\}$. Our first
ingredient will be a vector-valued function, $\bw$, on the integers
greater than $1$.  The second ingredient will be a (rooted), labeled
binary tree, $T_n$, for each $n$. The desired vectors $\bu^i$ will be
the labels of the vertices of our tree $T_n$.

For each $n\geq 2$ we now define $\bw(n) =
(w(n)_0,w(n)_1,\ldots,w(n)_{n-1})$ as follows. If $n$ is odd, then set
$w(n)_i = (n-1)/2$ for $i$ odd and $-(n+1)/2$ for $i$ even. Set $\bw(2)
= (1,-1)$ and $\bw(4) = (1,-1,-1,1)$. For $n$ even and greater than $4$,
say $n=2m$, set $w(n)_i = w(m)_{i\pmod m}$ for $0\leq i\leq
n-1$. Table~\ref{tab:w} shows the values of $\bw$ for small $n$.

\begin{table}[ht]
  \centering
  \caption{Values of $\bw$ function (negatives are denoted by bars).}
  \begin{tabular}{@{}l>{$}c<{$}l>{$}c<{$}@{}} \toprule
    $n$ & \bw(n) & $n$ & \bw(n)\\\midrule
    3 & 1\,\overline{2}\,1 &     4 & 1\,\overline{1}\,\overline{1}\,1\\
    5 & 2\,\overline{3}\,2\,\overline{3}\,2 &     6 & 1\,\overline{2}\,1\,1\,\overline{2}\,1\\
    7 & 3\,\overline{4}\,3\,\overline{4}\,3\,\overline{4}\,3 &     8 & 1\,\overline{1}\,\overline{1}\,1\,1\,\overline{1}\,\overline{1}\,1\\
    9 & 4\,\overline{5}\,4\,\overline{5}\,4\,\overline{5}\,4\,\overline{5}\,4 &    10 & 2\,\overline{3}\,2\,\overline{3}\,2\,2\,\overline{3}\,2\,\overline{3}\,2\\
    11 & 5\,\overline{6}\,5\,\overline{6}\,5\,\overline{6}\,5\,\overline{6}\,5\,\overline{6}\,5 &    12 & 1\,\overline{2}\,1\,1\,\overline{2}\,1\,1\,\overline{2}\,1\,1\,\overline{2}\,1\\\bottomrule
  \end{tabular}
  \label{tab:w}
\end{table}

Given $n\geq 2$, we construct $T_n$ iteratively by depth (i.e.,
distance from the root), starting with a root labeled $\bw(n)$. Once
all vertices of depth at most $d$ have been identified and labeled, we
construct vertices at depth $d+1$ as follows: Suppose a vertex at
depth $d$ has label $\bu=(u_0,u_1,\ldots,u_{n-1})$. Let $I_{+}(\bu) =
\{i_0 < i_1 < \cdots < i_{a'-1}\}$ be the indices for which $u_i > 0$
and $I_{-}(\bu) = \{j_0 < j_1 < \cdots < j_{a''-1}\}$ be those indices
for which $u_j < 0$. If $|I_{+}(\bu)| = a'$ is at least $2$, then we
attach a left child with label $\bu'=(u'_0,u'_1,\ldots,u'_{n-1})$
where
\begin{equation}\label{eq:lsub}
  u'_i = 
  \begin{cases}
    0, & \text{ for } i\not\in I_{+}(\bu),\\
    w(a')_r, & \text{ for } i = i_r.
  \end{cases}
\end{equation}
Similarly, if $|I_{-}(\bu)| = a''$ is at least $2$, we attach a right
child $\bu'' = (u''_0,u''_1,\ldots,u''_{n-1})$ where
\begin{equation}\label{eq:rsub}
  u''_i = 
  \begin{cases}
    0, & \text{ for } i\not\in I_{-}(\bu),\\
    w(a'')_r, & \text{ for } i = j_r.
  \end{cases}
\end{equation}
If $I_{+}(\bu) = I_{-}(\bu) = 1$, then $u$ is a leaf. 

\begin{figure}[h]
\begin{tikzpicture}
  \Tree [.\fbox{$5\,\overline{6}\,5\,\overline{6}\,5\,\overline{6}\,5\,\overline{6}\,5\,\overline{6}\,5$}
       [.\fbox{$1\myg\overline{2}\myg 1\myg 1\myg\overline{2}\myg 1$}
         [.\fbox{$1\myg\myg\myg\overline{1}\myg\overline{1}\myg\myg\myg 1$}
           [.\fbox{$1\myg\myg\myg\myg\myg\myg\myg\myg\myg\overline{1}$} ]
           [.\fbox{$\myg\myg\myg\myg 1\myg\overline{1}\myg\myg\myg\myg$} ]
         ]
         [.\fbox{$\myg\myg 1\myg\myg\myg\myg\myg \overline{1}\myg\myg$} ]
       ]
       [.\fbox{$\myg 2\myg\overline{3}\myg 2\myg\overline{3}\myg 2\myg$}
         [.\fbox{$\myg 1\myg \myg \myg \overline{2}\myg \myg \myg 1\myg $}
           [.\fbox{$\myg 1\myg \myg \myg \myg \myg \myg \myg \overline{1}\myg $} ]
         ] 
         [.\fbox{$\myg \myg \myg 1\myg \myg \myg \overline{1}\myg \myg \myg$} ] ] ]                             
\end{tikzpicture}
\caption{The tree $T_{11}$ (zeros are represented by underscores).}\label{fig:t11}
\end{figure}
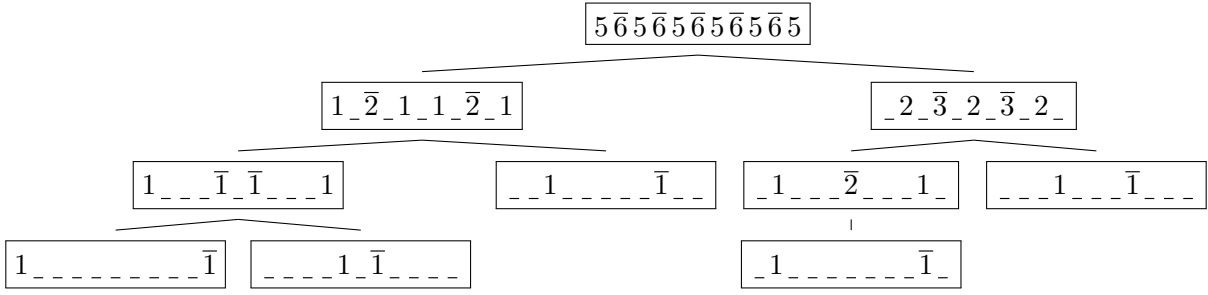

Note that the vector formed by the nonzero entries of any vertex $\bu$
of $T_n$ equals $\bw(k)$ for some $k$. 
If $\bu$ corresponds to $\bw(2)$ after ignoring zeros, then we say
that $\bu^i$ as \emph{skew-symmetric} since $u^i_j = -u^i_j$ for all
$j$. In all other cases, $u^i_j = u^i_{n-1-j}$ for all $j$ and we
refer to such $\bu^i$ as \emph{symmetric}.

\begin{lemma}\label{lem:tree}
  For $n\geq 2$, $T_n$ has $n-1$ vertices, $\lfloor n/2\rfloor$ of
  which are skew-symmetric and $\lfloor (n-1)/2\rfloor$ of which are
  symmetric.
\end{lemma}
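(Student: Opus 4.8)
The plan is to prove both statements simultaneously by induction on $n$, tracking the tree structure carefully. The key observation that makes this work is the one already noted in the text: the nonzero entries of any vertex label $\bu$ in $T_n$ form the vector $\bw(k)$ for some $k \le n$, and the construction of the children of $\bu$ depends only on the \emph{number} of positive entries $a'$ and negative entries $a''$ of $\bu$. So I would first isolate, for each $k\ge 2$, the quantity $(p(k), q(k))$ = (number of positive entries of $\bw(k)$, number of negative entries of $\bw(k)$); from the explicit definition of $\bw$ one reads off that for $k$ odd, $\bw(k)$ has $\lceil k/2\rceil$ positive and $\lfloor k/2\rfloor$ negative entries, for $k=2$ it is $(1,1)$, for $k=4$ it is $(2,2)$, and for even $k=2m>4$ the pattern of signs is just the sign pattern of $\bw(m)$ repeated twice, so $(p(2m),q(2m)) = (2p(m), 2q(m))$.

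First I would establish the vertex count. Define $f(n)$ to be the number of vertices of $T_n$; I claim $f(n)=n-1$. The root contributes $1$; its left subtree is (isomorphic to) $T_{a'}$ where $a' = p(n)$ is the number of positive entries of $\bw(n)$, and its right subtree is $T_{a''}$ where $a'' = q(n)$, with the convention that $T_1$ is empty (a single positive or single negative entry yields a leaf with no children, matching $f(1)=0$). So $f(n) = 1 + f(p(n)) + f(q(n))$, and since $p(n)+q(n)=n$ in every case (each entry of $\bw(n)$ is nonzero), the inductive hypothesis gives $f(n) = 1 + (p(n)-1) + (q(n)-1) = p(n)+q(n)-1 = n-1$. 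The base cases $n=2,3$ are immediate from the explicit $\bw$ values.

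Next, the symmetric/skew-symmetric count. A vertex is skew-symmetric precisely when its nonzero entries form $\bw(2)=(1,-1)$, i.e.\ when the corresponding subtree is a single leaf arising from a $k=2$ call; equivalently, skew-symmetric vertices are exactly the leaves whose nonzero part has length $2$. I would set up the induction on the recursive decomposition $T_n = \{\text{root}\} \cup T_{p(n)} \cup T_{q(n)}$ and let $s(n), t(n)$ denote the number of skew-symmetric and symmetric vertices of $T_n$. The root of $T_n$ is skew-symmetric iff $n=2$. For $n\ge 3$ the root is symmetric (here I'd invoke the text's remark that any non-$\bw(2)$ vertex satisfies $u_j = u_{n-1-j}$), so $s(n) = s(p(n)) + s(q(n))$ and $t(n) = 1 + t(p(n)) + t(q(n))$. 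Using the inductive hypotheses $s(k) = \lfloor k/2\rfloor$, $t(k) = \lfloor (k-1)/2\rfloor$, it remains to verify the two identities
\begin{align*}
  \lfloor p(n)/2\rfloor + \lfloor q(n)/2\rfloor &= \lfloor n/2\rfloor,\\
  1 + \lfloor (p(n)-1)/2\rfloor + \lfloor (q(n)-1)/2\rfloor &= \lfloor (n-1)/2\rfloor,
\end{align*}
for $n\ge 3$, together with $s(2)=1$, $t(2)=0$, $s(3)=1$, $t(3)=1$ as base cases. For $n$ odd these follow from $p(n)=\lceil n/2\rceil = (n+1)/2$ and $q(n)=(n-1)/2$ by direct computation; for $n$ even they follow from the parities of $p(n)$ and $q(n)$ — note $p(2m)=2p(m)$ and $q(2m)=2q(m)$ are both even, which makes the floor functions split cleanly.

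The main obstacle I anticipate is purely bookkeeping: one must handle the irregular base cases $n=2,3,4$ of the $\bw$ recursion carefully (since $\bw(4)$ is defined directly rather than by the doubling rule, and $\bw(6)=\bw(3)\bw(3)$ uses $m=3$ which is odd), and one must be careful that the "subtree rooted at a child of $\bu$ is $T_k$" claim is literally true — i.e.\ that relabeling the positive (resp.\ negative) positions of $\bu$ in increasing order and filling in $\bw(a')$ (resp.\ $\bw(a'')$) reproduces exactly the construction of $T_{a'}$ (resp.\ $T_{a''}$), with zeros inserted in the complementary positions. This is immediate from comparing \eqref{eq:lsub} and \eqref{eq:rsub} with the construction rule, since both the child-labels and the further subdivision depend only on the multiset of signs and the induced linear order, not on the ambient index set — but it should be stated explicitly as the one structural lemma underpinning the whole induction. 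Everything else is the arithmetic of floor functions recorded above.
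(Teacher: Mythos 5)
Your proof is correct, and at the top level it follows the same plan as the paper's: induction on $n$ via the decomposition of $T_n$ into its root plus a left subtree (isomorphic to $T_{p(n)}$) and a right subtree (isomorphic to $T_{q(n)}$), where $p(n),q(n)$ are the numbers of positive and negative entries of $\bw(n)$.

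Where you differ is in the organization of the arithmetic, and your version is genuinely cleaner. The paper factors $n = 2^k m$ with $m$ odd, splits into sub-cases $k=0$, $k=1$, $k\geq 2$, and plugs the closed forms $p(n)=2^k(m+1)/2$, $q(n)=2^k(m-1)/2$ into the floor-function computations. You instead keep $p(n)$ and $q(n)$ abstract and reduce everything to two facts: $p(n)+q(n)=n$ (which gives the vertex count $n-1$ instantly), and the parity observation that $p(n)$ and $q(n)$ are both even when $n$ is even, while for $n$ odd they are the consecutive integers $(n\pm 1)/2$. This makes the identities
\begin{align*}
  \lfloor p(n)/2\rfloor + \lfloor q(n)/2\rfloor &= \lfloor n/2\rfloor,\\
  1 + \lfloor (p(n)-1)/2\rfloor + \lfloor (q(n)-1)/2\rfloor &= \lfloor (n-1)/2\rfloor
\end{align*}
immediate without any case-split on the $2$-adic valuation of $n$. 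It also quietly sidesteps a small glitch in the paper's argument: the formulas $2^k(m\pm 1)/2$ are only valid when $m\geq 3$, and fail when $n$ is a power of $2$ (there $p(n)=q(n)=2^{k-1}$, not $2^k$ and $0$), because the sign-pattern recursion for even $n>4$ bottoms out at $\bw(4)$ rather than $\bw(1)$. Finally, the ``structural lemma'' you flag --- that after erasing the complementary zero coordinates, the left and right subtrees of $T_n$ are \emph{literally} the constructions of $T_{p(n)}$ and $T_{q(n)}$, because equations~\eqref{eq:lsub} and~\eqref{eq:rsub} depend only on the induced order of the support --- is exactly the right thing to state explicitly; the paper uses this fact implicitly without comment.
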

\begin{proof}
  First note that
  \begin{equation*}
    \lfloor n/2\rfloor + \lfloor (n-1)/2\rfloor = n-1,
  \end{equation*}
  regardless of the parity of $n$. So the claim regarding the total
  number of vertices of $T_n$ follows directly from the classification
  of their types.

  We enumerate the number of (skew-)symmetric vertices by induction on
  $n$. The base cases of $n\in \{2,3,4\}$ are easily checked directly. We now
  write $n=2^km > 4$ where $m$ is odd. We break into cases according
  to whether $k=0$, $k=1$, $k\geq 2$. As the arithmetic is
  similar in each case, we only work out in detail the case of
  $k=2$. Here the left subtree has the same number of vertices as
  $T_{2^k(m+1)/2}$ and the right subtree the same number of vertices
  as $T_{2^k(m-1)/2}$. It follows by induction that there are
  \begin{equation*}
    \left\lfloor \frac{2^k\frac{m+1}{2}}{2}\right\rfloor + 
    \left\lfloor \frac{2^k\frac{m-1}{2}}{2}\right\rfloor =
    2^{k-2}(m+1) + 2^{k-2}(m-1) = 2^{k-1}m
  \end{equation*}
  skew-symmetric vertices and 
  \begin{equation*}
    \left\lfloor \frac{2^k\frac{m+1}{2}-1}{2}\right\rfloor + 
    \left\lfloor \frac{2^k\frac{m-1}{2}-1}{2}\right\rfloor = 
    2^{k-2}(m+1)-1 + 2^{k-2}(m-1) -1 = 2^{k-1}m - 2
  \end{equation*}
  symmetric vertices contained in these two subtrees. Together with
  the root (which is symmetric for all $n > 2$), we obtain
  $2^{k-1}m=\lfloor \frac{n}{2}\rfloor$ skew-symmetric vertices and
  $(2^{k-1}m-2)+1 = \lfloor \frac{n-1}{2}\rfloor$ symmetric vertices
  in $T_n$. The other cases are similar.
\end{proof}

\begin{defn}
Let $n\geq 2$. Define the set $U(n) =
\{\bu^1,\bu^2,\ldots,\bu^{n-1}\}$ by setting $\bu^i$ to be the $i$-th
vertex encountered while performing a depth-first traversal of $T_n$
(choose the left child before the right child).
\end{defn}

Note that while $\bu^i$ depends on $n$, this dependence is typically
omitted from the notation. When we find it useful to explicitly
indicate $n$, we will write $\bu^{n;i}$ for $\bu^i$. Continuing our
example with $n=11$ from Figure~\ref{fig:t11} we see immediately that
\begin{alignat*}{3}
& \bu^1 = 5\,\overline{6}\,5\,\overline{6}\,5\,\overline{6}\,5\,\overline{6}\,5\,\overline{6}\,5, 
&\quad & \bu^2 = 1\myg\overline{2}\myg 1\myg 1\myg\overline{2}\myg 1,\\
& \bu^3 = 1\myg\myg\myg\overline{1}\myg\overline{1}\myg\myg\myg 1, 
&\quad & \bu^4 = 1\myg\myg\myg\myg\myg\myg\myg\myg\myg\overline{1},\\
& \bu^5 = \myg\myg\myg\myg 1\myg\overline{1}\myg\myg\myg\myg, 
&\quad & \bu^6 = \myg\myg 1\myg\myg\myg\myg\myg \overline{1}\myg\myg\\
& \bu^7 = \myg 2\myg\overline{3}\myg 2\myg\overline{3}\myg 2\myg, 
&\quad & \bu^8 = \myg 1\myg \myg \myg \overline{2}\myg \myg \myg 1\myg,\\
& \bu^9 = \myg 1\myg \myg \myg \myg \myg \myg \myg \overline{1}\myg, 
&\quad & \bu^{10} = \myg \myg \myg 1\myg \myg \myg \overline{1}\myg \myg \myg.
\end{alignat*}

\begin{lemma}\label{lem:ufacts}
  For $n\geq 2$ and $\bu\in U(n)$,
  \begin{enumerate}
    \item $\sum_{i=0}^{n-1} u_i = 0$ \label{lem:uzero} and 
    \item $|\{u_i > 0\}| = |\{u_i < 0\}| = 1$. \label{lem:twodistinct}
  \end{enumerate}
\end{lemma}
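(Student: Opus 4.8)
The plan is to reduce both parts to properties of the weight vectors $\bw(k)$, which I treat first. By strong induction on $k\geq 2$ I will show that every $\bw(k)$ (a) has no zero coordinate, (b) satisfies $\sum_j w(k)_j = 0$, and (c) takes exactly one positive value and exactly one negative value, i.e.\ $|\{w(k)_j:\,w(k)_j>0\}| = |\{w(k)_j:\,w(k)_j<0\}| = 1$. The cases $k\in\{2,3,4\}$ follow from the explicit vectors $(1,-1)$, $(1,-2,1)$, $(1,-1,-1,1)$. For odd $k\geq 5$ the vector $\bw(k)$ alternates between the $\frac{k+1}{2}$ copies of $\frac{k-1}{2}>0$ and the $\frac{k-1}{2}$ copies of $-\frac{k+1}{2}<0$, so (a) and (c) are clear and $\sum_j w(k)_j = \frac{k+1}{2}\cdot\frac{k-1}{2} - \frac{k-1}{2}\cdot\frac{k+1}{2} = 0$. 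For even $k=2m>4$, the defining relation $w(k)_i = w(m)_{i\bmod m}$ exhibits $\bw(k)$ as $\bw(m)$ concatenated with itself; since concatenation changes neither the set of zero, positive, or negative values nor the total sum, (a)--(c) for $\bw(k)$ follow from the inductive hypothesis applied to $\bw(m)$ (note $m\geq 3$).

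Next I will verify the remark stated just after the construction of $T_n$: for every vertex $\bu$ of $T_n$, its nonzero coordinates, listed in increasing index order, form $\bw(k)$ for some $k\geq 2$. This is an induction on the depth of the vertex. The root is $\bw(n)$. Given a vertex with label $\bu$ having $a' = |I_{+}(\bu)|\geq 2$, its left child $\bu'$ is, by \eqref{eq:lsub}, supported on $I_{+}(\bu)$ with the values $w(a')_0,\ldots,w(a')_{a'-1}$ placed there in order; since $\bw(a')$ has no zero coordinate by step~(a), the support of $\bu'$ is exactly $I_{+}(\bu)$, so the nonzero coordinates of $\bu'$ in increasing order are precisely $\bw(a')$. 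The right child is handled identically using \eqref{eq:rsub} and $\bw(a'')$.

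The lemma now follows: let $\bu\in U(n)$, so $\bu$ is a vertex of $T_n$ and by the previous paragraph its nonzero coordinates form some $\bw(k)$. Hence $\sum_i u_i = \sum_j w(k)_j = 0$ by step~(b), proving part~\eqref{lem:uzero}; and the set of positive (resp.\ negative) values of $\bu$ coincides with that of $\bw(k)$, which is a singleton by step~(c), proving part~\eqref{lem:twodistinct}.

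The only real bookkeeping is in the induction on the $\bw(k)$, and within it the item to treat with some care is (a): the fact that $\bw$ never has a zero coordinate is exactly what forces the support of a child vertex to equal the positive (or negative) support of its parent, and that is what makes the second step — and hence the lemma — go through. Beyond the routine parity split for odd versus even $k$ I do not expect any genuine difficulty.
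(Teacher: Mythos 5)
Your proof is correct and takes essentially the same approach as the paper's: reduce to showing that the nonzero coordinates of each vertex label form some $\bw(k)$, then verify the desired properties of $\bw(k)$ by induction (the paper states the tree-vertex fact without proof just before the lemma, and your depth induction together with the ``no zero coordinate'' observation is a careful verification of it). The only minor discrepancy worth noting is that the paper's prose definition of $\bw(n)$ for $n$ odd appears to have the parities of $i$ swapped relative to Table~\ref{tab:w}; you have implicitly used the (correct) table version, and in any case the sign convention is immaterial to the three properties you establish.
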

\begin{proof}
  It follows immediately from equations~\eqref{eq:lsub}
  and~\eqref{eq:rsub} that given $\bu\in U(n)$, there exists a $k \geq
  2$ such that $\sum_{i=0}^{n-1} u_i = \sum_{i=0}^{k-1} w(k)_i$. That
  this latter sum is zero follows by induction. 

  For the second part, note that each $\bu$ equals $\bw(k)$ for some
  $k$ once zeros are ignored. That each $\bw(k)$ has a unique positive
  value and a unique negative value follows by induction.
\end{proof}

\begin{prop}\label{prop:uorthog}
  The set $U(n)$ is an orthogonal set.
\end{prop}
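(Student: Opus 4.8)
The plan is to exploit the recursive, tree-based structure of $U(n)$ and reduce orthogonality of an arbitrary pair to the simpler question of orthogonality among the vectors $\bw(k)$ and among translates thereof. First I would observe that any two distinct vertices $\bu,\boldv$ of $T_n$ fall into one of two configurations: either one is a descendant of the other, or neither is a descendant of the other (they lie in ``incomparable'' positions in the tree). I would handle the incomparable case first, since it is the easiest: by construction (see~\eqref{eq:lsub} and~\eqref{eq:rsub}), the support of any vertex is contained in the support of its parent, and when a vertex has two children their supports are $I_+(\bu)$ and $I_-(\bu)$, which are disjoint. Descending the tree, the supports of the vertices in the left subtree below $\bu$ are all contained in $I_+(\bu)$ and those in the right subtree are all contained in $I_-(\bu)$. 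Hence any two incomparable vertices have disjoint supports, so their outer (here, coordinatewise) product is identically zero and $\langle\bu,\boldv\rangle = 0$ trivially.

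The remaining, genuine case is when $\boldv$ is a proper descendant of $\bu$. Here I would argue that it suffices to treat the case where $\boldv$ is a \emph{child} of $\bu$: indeed, if $\boldv$ lies strictly below a child $\bu'$ of $\bu$, then the support of $\boldv$ is contained in $I_+(\bu)$ (say), on which $\bu$ takes the single constant positive value, by Lemma~\ref{lem:ufacts}(2). Thus on the support of $\boldv$, the vector $\bu$ is a nonzero scalar multiple of the all-ones vector restricted to that support, so $\langle\bu,\boldv\rangle$ is that scalar times $\sum_i v_i$, which vanishes by Lemma~\ref{lem:ufacts}(1). This same computation in fact also dispatches the parent-child case: if $\bu'$ is the left child of $\bu$, its support is exactly $I_+(\bu)$, on which $\bu$ is constant, and again $\langle\bu,\bu'\rangle$ is a scalar multiple of $\sum_i u'_i = 0$. (The right child is symmetric.) So the key inputs are precisely the two facts already proved in Lemma~\ref{lem:ufacts}: every vector in $U(n)$ sums to zero, and the nonzero entries of any vertex take only one positive and one negative value, with the positive entries and the negative entries each forming a contiguous ``support class'' that the children inherit.

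The step I expect to require the most care is making the support-containment and ``parent is constant on the child's support'' claims precise across all the recursive definitions, especially since $\bw$ itself is defined by a nested doubling recursion ($\bw(2m)_i = \bw(m)_{i \bmod m}$ for even $n > 4$) and one must be sure that in every vertex of $T_n$ the positive entries are all equal and the negative entries are all equal — this is exactly Lemma~\ref{lem:ufacts}(2), which I will quote rather than reprove. Once that is in hand, the whole argument is just a short case analysis (incomparable vertices; ancestor--descendant vertices) plus one application each of parts (1) and (2) of Lemma~\ref{lem:ufacts}. I would write it up as: (i) reduce to pairs where one is an ancestor of the other, noting disjoint supports otherwise; (ii) for an ancestor--descendant pair, restrict attention to the descendant's support, on which the ancestor is constant by Lemma~\ref{lem:ufacts}(2); (iii) conclude the inner product is a scalar multiple of the sum of the descendant's entries, which is $0$ by Lemma~\ref{lem:ufacts}(1).
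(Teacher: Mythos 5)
Your proposal is correct and follows essentially the same route as the paper's proof: split into the incomparable case (disjoint supports, so the coordinatewise product vanishes) and the ancestor--descendant case (the ancestor is constant on the descendant's support by Lemma~\ref{lem:ufacts}(2), so the inner product is a scalar multiple of the descendant's entry sum, which is $0$ by Lemma~\ref{lem:ufacts}(1)). The only cosmetic difference is that the paper phrases the case split via depth-first order ($j<k$ with $\bu^j$ on or off the root path of $\bu^k$), while you phrase it directly in terms of tree comparability; your observation that no separate reduction to the parent--child case is needed matches what the paper actually does.
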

\begin{proof}
  Consider vectors $\bu^j,\bu^k\in U(n)$. Without loss of generality
  we assume $j < k$. If $\bu^j$ does not lie on the path from $\bu^k$
  to the root, then the orthogonality follows trivially since for each
  index $0 \leq i \leq n-1$, either $u^j_i = 0$ or $u^k_i = 0$ (or
  both).  So suppose $\bu^j$ does lie on the path from $\bu^k$ to the
  root. It follows from the definitions of $I_{+}$, $I_{-}$,
  Lemma~\ref{lem:ufacts}.\ref{lem:twodistinct}, and
  equations~\eqref{eq:lsub} and~\eqref{eq:rsub} that for all pairs
  $u^k_i, u^k_{i'} \neq 0$, we have $u^j_i = u^j_{i'}$. But then
  $\bu^j\cdot \bu^k$ is a scalar multiple of $\sum_{i=0}^{n-1} u^k_i$,
  which by Lemma~\ref{lem:ufacts}.\ref{lem:uzero} is zero.
\end{proof}

The \emph{outer product} of two vectors $\boldsymbol{a} =
(a_0,a_1,\ldots,a_{m-1})$, $\boldsymbol{b} =
(b_0,b_1,\ldots,b_{n-1})$ is given by
\begin{equation*}\label{eq:dyadic}
  \ba\bb = \ba\otimes \bb = \begin{bmatrix} a_0b_0 & a_0b_1 & \hdots & a_0b_{n-1}\\
    a_1b_0 & a_1b_1 & \hdots & a_1b_{n-1}\\
    \vdots & \vdots & \ddots & \vdots\\
    a_{m-1}b_0 & a_{m-1}b_1 & \hdots & a_{m-1}b_{n-1}
  \end{bmatrix}.
\end{equation*}
This product, which we denote by juxtaposition, can be viewed
(treating $\ba$ and $\bb$ as row vectors) as a matrix product
$\ba^T\bb$ or as a \emph{dyadic product}. We are now ready to prove
Theorem~\ref{thm:main}.

\begin{proof}[Proof of Theorem~\ref{thm:main}]
  Note that each row and column of $\bv{i}{j}$ is a scalar multiple of
  a vector whose entries sum to zero. It follows immediately that each
  element of $B_{m,n}$ lies in $\lsvec_{m,n}$.

  We first show that the cardinality of the set $B_{m,n}$ is $(m-1)(n-1)$, the
  dimension of $\lsvec_{m,n}$. So suppose that $\bv{i}{j} = \bv{k}{\ell}$ for
  some $1\leq i, k\leq m-1$ and $1\leq j, \ell\leq n-1$. Then,
  in particular, the $a$-th row of $\bv{i}{j}$ equals the $a$-th row
  of $\bv{k}{\ell}$ for each $1\leq a\leq m$. This implies in turn
  that
  \begin{equation}
    u^i_a\bu^j = (u^i_au^j_0,u^i_au^j_1,\ldots, u^i_au^j_{n-1}) \text{ equals }
    u^k_a\bu^\ell = (u^k_au^\ell_0,u^k_au^\ell_1,\ldots, u^k_au^\ell_{n-1}).
  \end{equation}
  Since the $U(n)$ is an orthogonal set by
  Proposition~\ref{prop:uorthog}, it follows that either
  $u^i_a=u^k_a=0$ or that $\bu^j = \bu^\ell$. Since $\bu^i$ is not the
  zero vector, it follows that we can pick an $a$ for which
  $u^i_a\neq 0$. We conclude that $\bu^j = \bu^\ell$ and hence that
  $j=\ell$ by orthogonality. In turn, this tells us that $u^i_a=u^k_a$
  for all $a$. Again by Proposition~\ref{prop:uorthog}, we conclude $i
  = k$. We conclude that the $(m-1)(n-1)$ products $\bv{i}{j}$ are all
  distinct.

  If follows immediately from the definition of the $\bv{i}{j}$ and
  Proposition~\ref{prop:uorthog} that none of the $\bv{i}{j}$ are the
  zero vector. Hence, to show linear independence, it suffices to show
  that they are pairwise orthogonal. Since we know the dimension of
  $\lsvec_{m,n}$ to be $(m-1)(n-1)$, we can then conclude that the elements
  of $B_{m,n}$ form a basis, as desired. So: Let $1\leq i, k\leq m-1$ and
  $1\leq j, \ell\leq n-1$. Then
  \begin{equation*}
    \bv{i}{j}\cdot \bv{k}{\ell} = \sum_{a=1}^m\sum_{b=1}^n (\bv{i}{j})_{a,b}(\bv{k}{\ell})_{a,b}
    = \sum_{a=1}^m u^i_au^k_a\sum_{b=1}^n u^j_bu^\ell_b = (\bu^i\cdot \bu^k)(\bu^j\cdot \bu^\ell).
  \end{equation*}
  By the orthogonality of the $\bu^i$, we conclude immediately that
  $\bv{i}{j}$ and $\bv{k}{\ell}$ are orthogonal whenever $(i,j) \neq
  (k,\ell)$. This completes the proof.
\end{proof}

\begin{example}
  The zeroed Latin square $\begin{ytableau}1 & -1 & 0\\-1 & 0 & 1\\0 & 1 &
    -1\end{ytableau}$ can be written as a linear combination of two of
    the elements of $B_{3,3}$:
    \begin{equation*}
      \begin{bmatrix}1 & -1 & 0\\-1 & 0 & 1\\0 & 1 &
        -1\end{bmatrix} = \frac{1}{2}\left(\bv{1}{2} + \bv{2}{1}\right) = \frac{1}{2}
        \begin{bmatrix} 1 & 0 & -1\\-2 & 0 & 2\\1 & 0 & -1\end{bmatrix} +
          \frac{1}{2}\begin{bmatrix} 1 & -2 & 1\\ 0 & 0 & 0\\-1 & 2 & -1\end{bmatrix}.
    \end{equation*}
    Table~\ref{tab:ex} lists the expansions for all twelve order-$3$
    zeroed Latin squares.
\end{example}

\begin{table}[ht]
  \centering
  \caption{Expansions of order-$3$ zeroed Latin squares in the basis $B_{3,3}$.}
  \begin{tabular}{@{}c >{$}c<{$} >{$}c<{$} >{$}c<{$} >{$}c<{$} c c >{$}c<{$} >{$}c<{$} >{$}c<{$} >{$}c<{$}@{}}\toprule
     & \bv{1}{1} & \bv{1}{2} & \bv{2}{1} & \bv{2}{2} & &  & \bv{1}{1} & \bv{1}{2} & \bv{2}{1} & \bv{2}{2}\\
    \cmidrule{1-5} \cmidrule{7-11}
\begin{ytableau} 1 & -1 &  0\\ -1 &  0 &  1\\  0 &  1 & -1\end{ytableau} &  0 & \frac{1}{2} & \frac{1}{2} &  0 &

& \begin{ytableau} 0 &  1 & -1\\ -1 &  0 &  1\\  1 & -1 &  0\end{ytableau} &  0 & \frac{1}{2} &  -\frac{1}{2} &  0\\[0.8cm]

\begin{ytableau} 0 & -1 &  1\\  1 &  0 & -1\\ -1 &  1 &  0\end{ytableau} &  0 &  -\frac{1}{2} & \frac{1}{2} &  0 &

& \begin{ytableau}-1 &  1 &  0\\  1 &  0 & -1\\  0 & -1 &  1\end{ytableau} &  0 &  -\frac{1}{2} &  -\frac{1}{2} &  0\\[0.8cm]

\begin{ytableau} 0 & -1 &  1\\ -1 &  1 &  0\\  1 &  0 & -1\end{ytableau} & \frac{1}{4}& \frac{1}{4}& \frac{1}{4}&  -\frac{3}{4} &
& \begin{ytableau} 1 &  0 & -1\\ -1 &  1 &  0\\  0 & -1 &  1\end{ytableau} & \frac{1}{4}& \frac{1}{4}&  -\frac{1}{4} & \frac{3}{4}\\[0.8cm]
\begin{ytableau} 1 & -1 &  0\\  0 &  1 & -1\\ -1 &  0 &  1\end{ytableau} & \frac{1}{4}&  -\frac{1}{4} & \frac{1}{4}& \frac{3}{4} &
& \begin{ytableau}-1 &  0 &  1\\  0 &  1 & -1\\  1 & -1 &  0\end{ytableau} & \frac{1}{4}&  -\frac{1}{4} &  -\frac{1}{4} &  -\frac{3}{4}\\[0.8cm]

\begin{ytableau} 1 &  0 & -1\\  0 & -1 &  1\\ -1 &  1 &  0\end{ytableau} &  -\frac{1}{4} & \frac{1}{4}& \frac{1}{4}& \frac{3}{4} &
& \begin{ytableau}-1 &  1 &  0\\  0 & -1 &  1\\  1 &  0 & -1\end{ytableau} &  -\frac{1}{4} & \frac{1}{4}&  -\frac{1}{4} &  -\frac{3}{4}\\[0.8cm]
\begin{ytableau}-1 &  0 &  1\\  1 & -1 &  0\\  0 &  1 & -1\end{ytableau} &  -\frac{1}{4} &  -\frac{1}{4} & \frac{1}{4}&  -\frac{3}{4} &
& \begin{ytableau} 0 &  1 & -1\\  1 & -1 &  0\\ -1 &  0 &  1\end{ytableau} &  -\frac{1}{4} &  -\frac{1}{4} &  -\frac{1}{4} & \frac{3}{4}\\\bottomrule
  \end{tabular}
  \label{tab:ex}
\end{table}

The transportation polytopes defined at the beginning of this section
are sometimes referred to as \emph{$2$-way transportation
  polytopes}. There are multiple ways to generalize to dimensions $d >
2$ by placing various constraints on $p_1\times p_2\times \cdots
\times p_d$ arrays of real numbers. In the case where all of the
$1$-marginals (i.e., the sums over all but one index) are specified,
we obtain an affine, $d$-dimensional analogue $V_{p_1,p_2,\ldots,p_d}$
of $\lsvec_{m,n}$ whose dimension is $\prod_{i=1}^d
(p_i-1)$. Arguments analogous to those given in this section show that
the $d$-fold products $\bu^{i_1}\bu^{i_2}\cdots\bu^{i_d}$ give an
orthogonal basis for $V_{p_1,p_2,\ldots,p_d}$.

\section{Magic squares}
\label{sec:magic}

In analogy with our terminology for Latin squares, we will use the
term \emph{zeroed magic square} to refer to a magic square for which
all row, column and main-diagonal sums are $0$.

Zeroed magic squares lie in a codimension-$2$ subspace of $\lsvec_{n,n}$
obtained by imposing the two additional constraints that $\sum_{i=1}^n
x_{i,i} = 0 = \sum_{i=1}^n x_{i,n-i+1}$. (We leave it to the
reader to check that these conditions are independent of each other
and of the Latin square conditions.) Let $\msvec_n$ denote this
codimension-$2$ subspace.

\begin{lemma}
  If $\bu^i,\bu^j\in U(n)$, $i\neq j$, then $\bv{i}{j}\in \msvec_n$.
\end{lemma}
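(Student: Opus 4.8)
The plan is to verify directly that $\bv{i}{j} = \bu^i \otimes \bu^j$ satisfies the two extra diagonal constraints $\sum_{a=1}^n x_{a,a} = 0$ and $\sum_{a=1}^n x_{a,n-a+1} = 0$ that cut $\msvec_n$ out of $\lsvec_{n,n}$. Since Theorem~\ref{thm:main} already tells us $\bv{i}{j} \in \lsvec_{n,n}$, only these two diagonal sums need checking. Writing the $(a,b)$-entry of $\bv{i}{j}$ as $u^i_a u^j_b$ (indexing from $0$ to $n-1$ to match the paper's conventions), the main-diagonal sum is $\sum_{a=0}^{n-1} u^i_a u^j_a = \bu^i \cdot \bu^j$, and the anti-diagonal sum is $\sum_{a=0}^{n-1} u^i_a u^j_{n-1-a}$.

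The first of these is immediately zero: by Proposition~\ref{prop:uorthog}, $U(n)$ is an orthogonal set, so $\bu^i \cdot \bu^j = 0$ whenever $i \neq j$. For the anti-diagonal sum, the key tool is the symmetry classification recorded just before Lemma~\ref{lem:tree}: every $\bu^\ell \in U(n)$ is either \emph{symmetric} (meaning $u^\ell_b = u^\ell_{n-1-b}$ for all $b$) or \emph{skew-symmetric} (meaning $u^\ell_b = -u^\ell_{n-1-b}$ for all $b$). So in the sum $\sum_a u^i_a u^j_{n-1-a}$, I would substitute $u^j_{n-1-a} = \epsilon_j u^j_a$ where $\epsilon_j = \pm 1$ depending on whether $\bu^j$ is symmetric or skew-symmetric. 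This turns the anti-diagonal sum into $\epsilon_j \sum_a u^i_a u^j_a = \epsilon_j (\bu^i \cdot \bu^j) = 0$, again by orthogonality.

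The only subtlety — and the step I'd expect to need the most care — is confirming that the symmetry dichotomy genuinely applies to \emph{every} element of $U(n)$, including deep vertices of $T_n$, not just the root. The paper asserts this in the paragraph preceding Lemma~\ref{lem:tree}, but I would want to make sure the indices involved line up: when a child vector $\bu'$ is formed via~\eqref{eq:lsub} from the positive positions $I_+(\bu)$ of a symmetric parent $\bu$, one should check that $I_+(\bu)$ is itself symmetric under $i \mapsto n-1-i$ (it is, because $u_i > 0 \iff u_{n-1-i} = \pm u_i > 0$ when $\bu$ is symmetric, and a parallel statement holds in the skew-symmetric case where the single positive and single negative index from Lemma~\ref{lem:ufacts}(\ref{lem:twodistinct}) swap), and that the reindexing by the $\bw$-values preserves the symmetry type. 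Since these facts are already established in the text, I would simply cite them; if pressed I'd note that $\bw(k)$ is palindromic for all $k$ (visible in Table~\ref{tab:w}), which propagates symmetry down the tree, while the skew-symmetric leaves are exactly the $\bw(2)$-type vertices.

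In short: the proof is two lines once the right facts are invoked — $\bv{i}{j}$'s main-diagonal sum is $\bu^i \cdot \bu^j = 0$ by Proposition~\ref{prop:uorthog}, and its anti-diagonal sum equals $\pm(\bu^i \cdot \bu^j) = 0$ after using that each $\bu^j$ is symmetric or skew-symmetric about its center. Hence $\bv{i}{j} \in \msvec_n$.
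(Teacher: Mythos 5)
Your proof is correct and takes a genuinely different (and arguably cleaner) route than the paper's. The paper mimics the orthogonality argument of Proposition~\ref{prop:uorthog}: it splits into cases according to whether $\bu^i$ and $\bu^j$ are related in the tree $T_n$, observes that the diagonal and anti-diagonal of $\bv{i}{j}$ are then either identically zero (unrelated case) or a scalar multiple of the deeper of the two vectors (ancestor case), and finishes by invoking the zero-sum fact Lemma~\ref{lem:ufacts}.\ref{lem:uzero}. You bypass the tree casework entirely: the main-diagonal sum is literally $\bu^i\cdot\bu^j$, which already vanishes by Proposition~\ref{prop:uorthog}, and the anti-diagonal sum folds onto $\pm(\bu^i\cdot\bu^j)$ once you substitute $u^j_{n-1-a}=\pm u^j_a$ using the (skew-)symmetry dichotomy. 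Both routes ultimately lean on the same dichotomy — the paper needs the reflection symmetry of the supports to justify that the anti-diagonal entries all vanish in the unrelated case and line up with $\bu^j$ in the ancestor case, while you invoke it head-on — but your version makes that dependence explicit and is shorter and easier to check. You are also right to flag the dichotomy as the lemma's real hinge: the paper asserts it in the paragraph preceding Lemma~\ref{lem:tree} without a formal proof, and your sketch (palindromicity of $\bw(k)$ for $k>2$, together with symmetry of $I_{+}(\bu)$ and $I_{-}(\bu)$ under $i\mapsto n-1-i$ when $\bu$ is symmetric) is the right way to close that gap. The only simplification worth noting is that the skew-symmetric case of the induction is vacuous — those vertices correspond to $\bw(2)$ and are therefore leaves of $T_n$, so there is nothing to propagate to children.
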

\begin{proof}
  This follows by an argument analogous to that found in the proof of
  Proposition~\ref{prop:uorthog}. If $\bu^i$ and $\bu^j$ are not
  related in $T_n$, then all diagonal and anti-diagonal entries of
  $\bv{i}{j}$ are zero. Otherwise, both the diagonal
  $(x_{1,1},x_{2,2},\ldots,x_{n,n})$ and anti-diagonal
  $(x_{n,1},x_{n-1,2},\ldots,x_{1,n})$ are scalar multiples of either
  $\bu^i$ or $\bu^j$ (depending on which is closer to the root). Since
  the sum of the entries of each $\bu^k$ is $0$ by
  Lemma~\ref{lem:ufacts}.\ref{lem:uzero}, the result follows.
\end{proof}

In light of the above lemma, we will construct a basis for $\msvec_n$
by taking $\{\bv{i}{j}:\, i < j\}$ and adjoining $(n-1)-2$ vectors
generated from the $n-1$ vectors of the form $\bv{i}{i}$. Write $k =
\lfloor n/2\rfloor$, $k' = \lfloor (n-1)/2\rfloor$, and (recalling the
definition from Section~\ref{sec:vecs} and Lemma~\ref{lem:tree}),
write $\{\bx^1,\bx^2,\ldots,\bx^k\}$ for the set $\{\bv{i}{i}:\,
\bu^i\text{ is skew-symmetric}\}$; write
$\{\by^1,\by^2,\ldots,\by^{k'}\}$ for the set $\{\bv{i}{i}:\,
\bu^i\text{ is symmetric}\}$.

For $1\leq i\leq k-1$, define
\begin{equation*}
  \overline{\bx}^i = \sum_{j=1}^{k} u^{k;i}_j \bx^j.
\end{equation*}
Since the nonzero diagonal entries of the $\bx^j$ are all $1$'s and the
nonzero anti-diagonal entries of the $\bx^j$ are all $-1$'s, it follows
immediately from Lemma~\ref{lem:ufacts}.\ref{lem:uzero} that each $\overline{\bx}^i$
lies in $\msvec_n$. Linear independence of these $k-1$ vectors will
follow from the orthogonality arguments contained in the proof of
Theorem~\ref{thm:msb}.

We can proceed similarly in finding a codimension-$1$ subspace of the
span of the $\by^i$, except we need to account for the fact that the
diagonal sums of the $\by^i$ vary. Let
\begin{equation*}
  \ell_i = \sum_{j=1}^n \by^i_{jj}, 1\leq i\leq k'\text{ and }\ell = \lcm\{\ell_1,\ldots,\ell_{k'}\}.
\end{equation*}
Then for $1\leq i\leq k'-1$ we set
\begin{equation*}
  \overline{\by}^i = \sum_{j=1}^{k'} \frac{\ell}{\ell_j} \bu_j^{k';i} \by^j.
\end{equation*}

\begin{theorem}\label{thm:msb}
  The set
  \begin{equation}\label{eq:msb}
    \{\bv{i}{j}:\,i<j\} \cup \{\overline{\bx}^i\}_{i=1}^{k-1} \cup \{\overline{\by}^i\}_{i=1}^{k'-1}
  \end{equation}
  is an orthogonal basis for $\msvec_n$.
\end{theorem}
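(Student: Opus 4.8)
The plan is to verify three things: that every vector in the set~\eqref{eq:msb} lies in $\msvec_n$; that the whole set is orthogonal; and that it has the right cardinality $(n-1)(n-2)/2 + (k-1) + (k'-1)$, which by Proposition~\ref{prop:dim} and the codimension-$2$ remark equals $\dim\msvec_n = (n-1)^2 - 2$. Membership in $\msvec_n$ is already handled for the $\bv{i}{j}$ with $i<j$ by the preceding lemma, and for the $\overline{\bx}^i$ and $\overline{\by}^i$ by the two observations made just before the theorem statement (the diagonal entries of the $\bx^j$ are all $1$, the anti-diagonal entries all $-1$, and the $\ell/\ell_j$ scaling was chosen precisely to equalize the diagonal sums of the $\by^j$ before combining with coefficients $\bu^{k';i}$ that sum to zero). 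So the real content is orthogonality, from which linear independence and hence (by the dimension count) the basis property follow.

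For orthogonality I would organize the pairs into several cases. First, $\bv{i}{j}\cdot\bv{k}{\ell} = (\bu^i\cdot\bu^k)(\bu^j\cdot\bu^\ell)$ exactly as computed in the proof of Theorem~\ref{thm:main}; so any two distinct $\bv{i}{j},\bv{k}{\ell}$ with $i<j$, $k<\ell$ are orthogonal by Proposition~\ref{prop:uorthog}. Next, $\overline{\bx}^i\cdot\overline{\bx}^{i'} = \sum_{j,j'} u^{k;i}_j u^{k;i'}_{j'}\,(\bx^j\cdot\bx^{j'})$; since the $\bx^j = \bv{a_j}{a_j}$ for distinct skew-symmetric indices $a_j$, Theorem~\ref{thm:main}'s computation gives $\bx^j\cdot\bx^{j'} = \|\bu^{a_j}\|^4\,\delta_{jj'}$, so $\overline{\bx}^i\cdot\overline{\bx}^{i'}$ is a positive combination of $u^{k;i}_j u^{k;i'}_j$, i.e.\ proportional to $\bu^{k;i}\cdot\bu^{k;i'}$ if all $\|\bu^{a_j}\|$ were equal --- they are not, so I need the slightly finer statement that $\sum_j \|\bu^{a_j}\|^4 u^{k;i}_j u^{k;i'}_j = 0$. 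This is where the structure of $U(k)$ re-enters: by Lemma~\ref{lem:ufacts}.\ref{lem:twodistinct} each $\bu^{k;i}$ has exactly one positive and one negative entry, and by Proposition~\ref{prop:uorthog} if $\bu^{k;i}$ lies on the root-path of $\bu^{k;i'}$ then $\bu^{k;i}$ is constant on the support of $\bu^{k;i'}$, so the weighted sum collapses to a constant times $\sum_j(\text{weight}_j)\,u^{k;i'}_j$ where the two surviving weights $\|\bu^{a_j}\|^4$ are \emph{equal} (the two nonzero positions of a symmetric-type configuration carry mirror-image labels, hence equal norms) --- giving zero. The same argument, with the extra scalars $\ell/\ell_j$ folded in, handles $\overline{\by}^i\cdot\overline{\by}^{i'}$.

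The remaining cross terms are $\bv{i}{j}\cdot\overline{\bx}^{i'}$, $\bv{i}{j}\cdot\overline{\by}^{i'}$, and $\overline{\bx}^{i'}\cdot\overline{\by}^{i''}$. Expanding $\overline{\bx}^{i'} = \sum_j u^{k;i'}_j\,\bv{a_j}{a_j}$, each term $\bv{i}{j}\cdot\bv{a_j}{a_j} = (\bu^i\cdot\bu^{a_j})(\bu^j\cdot\bu^{a_j})$ vanishes unless $i = a_j = j$, which is impossible since $i<j$; hence $\bv{i}{j}\perp\overline{\bx}^{i'}$ and likewise $\bv{i}{j}\perp\overline{\by}^{i'}$. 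Finally $\overline{\bx}^{i'}\cdot\overline{\by}^{i''}$ expands into terms $\bv{a}{a}\cdot\bv{b}{b} = \|\bu^a\|^2\|\bu^b\|^2\,\delta_{ab}$ with $a$ skew-symmetric and $b$ symmetric, so $a\neq b$ and every term vanishes. Assembling these cases shows the set~\eqref{eq:msb} is orthogonal, the vectors are nonzero (each $\bv{i}{j}$ is nonzero by Proposition~\ref{prop:uorthog}, and $\overline{\bx}^i,\overline{\by}^i$ are nonzero because their leading nonzero $u$-coefficient picks out a genuinely present $\bx$- or $\by$-vector), hence linearly independent; the cardinality matches $\dim\msvec_n$, so it is a basis.

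I expect the main obstacle to be the $\overline{\bx}^i\cdot\overline{\bx}^{i'}$ and $\overline{\by}^i\cdot\overline{\by}^{i'}$ computations: unlike the clean factorization in Theorem~\ref{thm:main}, here the inner products of the constituent $\bv{}{}$'s come with unequal weights $\|\bu^{a_j}\|^4$ (resp.\ $(\ell/\ell_j)^2\|\bu^{a_j}\|^2\ell_j$), and one must show these weights do not spoil orthogonality. The resolution is to invoke the root-path dichotomy from Proposition~\ref{prop:uorthog} for the \emph{inner} tree $T_k$ (resp.\ $T_{k'}$): whenever the dot product $\bu^{k;i}\cdot\bu^{k;i'}$ is the one that would be nonzero, one index sits on the other's root-path and is therefore constant on its two-element support, and --- crucially --- the two support positions are mirror images under $j\mapsto (k-1)-j$, so the corresponding constituent vectors are related by a coordinate permutation and have equal norms; the weighted sum then reduces to an unweighted one and vanishes by Lemma~\ref{lem:ufacts}.\ref{lem:uzero}. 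Making this mirror-symmetry bookkeeping precise (and checking the $\ell/\ell_j$ bookkeeping in the symmetric case) is the one spot that requires genuine care rather than routine expansion.
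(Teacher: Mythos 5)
Your overall plan (membership, orthogonality, dimension count) is the paper's plan, and your treatment of the cross terms is fine, although the paper dispatches them more compactly by observing that the $\overline{\bx}^i$ and $\overline{\by}^i$ are linear combinations of two disjoint subsets of the already-orthogonal $B_{n,n}$ that are also disjoint from $\{\bv{i}{j}: i<j\}$. The trouble is in your treatment of $\overline{\bx}^i\cdot\overline{\bx}^{i'}$ and $\overline{\by}^i\cdot\overline{\by}^{i'}$, which you flag as the main difficulty: your diagnosis of the difficulty is based on a false premise, and your proposed fix is both unnecessary and not rigorous.

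You assert that the weights $\|\bu^{a_j}\|^4$ appearing in $\overline{\bx}^i\cdot\overline{\bx}^{i'} = \sum_j \|\bu^{a_j}\|^4\, u^{k;i}_j u^{k;i'}_j$ are not all equal. They \emph{are} all equal. Every skew-symmetric $\bu\in U(n)$ reduces to $\bw(2)=(1,-1)$ after ignoring zeros, so it has exactly two nonzero entries $\pm 1$ and $\|\bu\|^2 = 2$, hence $\|\bx^a\|^2 = \|\bu^{a}\|^4 = 4$ for every $a$. The paper uses exactly this: the factor $4$ pulls out and the sum is $4(\bu^{k;i}\cdot\bu^{k;i'}) = 4\delta_{i,i'}$. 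For the $\by$ side, the weights $\|\by^a\|^2$ do vary, but the $\ell/\ell_a$ prefactors are chosen precisely so that the effective weight $(\ell/\ell_a)^2\|\by^a\|^2$ is the constant $\ell^2$; this hinges on the identity $\ell_a = \|\by^a\|$ (not $\|\by^a\|^2$), which the paper proves from $\by^a = \bu\bu$ and $\|\bu\bu\| = \|\bu\|^2$. You gesture at the $\ell/\ell_j$ normalization in your membership paragraph but then abandon it in the orthogonality computation in favor of a ``mirror symmetry'' argument.

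That replacement argument is problematic on its own terms. You claim that when $\bu^{k;i}$ lies on the root-path of $\bu^{k;i'}$, the weighted sum ``collapses to a constant times $\sum_j(\text{weight}_j) u^{k;i'}_j$ where the two surviving weights are equal.'' But the support of $\bu^{k;i'}$ need not have two elements --- it has two elements only when $\bu^{k;i'}$ is skew-symmetric, and otherwise can be much larger (e.g.\ the root of $T_k$ has full support). So ``the two surviving weights'' is not well posed, and the claimed pairing of ``mirror-image labels'' is asserted but never tied to any lemma in the paper. You have invented a subtlety that does not exist and proposed a fix that does not obviously close; the actual argument is the uniform-weight observation above, which is strictly simpler.
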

\begin{proof}
  We already know that $B_{n,n}$ is an orthogonal set. Note that each
  $\overline{\bx}^i$ is a linear combination of elements from
  $\{\bx^1,\ldots,\bx^k\}$ and each $\overline{\by}^j$ is a linear
  combination of elements from the disjoint set
  $\{\by^1,\ldots,\by^{k'}\}$. To prove orthogonality of the entire set,
  it therefore to suffices to show that the $\overline{\bx}^i$ and
  mutually orthogonal and that the $\overline{\by}^j$ are mutually
  orthogonal. Since we will have identified in equation~\eqref{eq:msb}
  $(n-1)^2-2$ linearly independent vectors in a
  $((n-1)^2-2)$-dimensional vector space, the claim will follow.

  We have
  \begin{align*}
    \overline{\bx}^i\cdot \overline{\bx}^j &= \left(\sum_{a=1}^k u_a^{k;i}\bx^a\right)\cdot \left(\sum_{b=1}^k u_b^{k;j}\bx^b\right)\\
    &= \sum_{a=1}^k \|\bx^a\|^2 u_a^{k;i}u_a^{k;j}\\
    &= 4(\bu^{k;i}\cdot\bu^{k;j}) = 4\delta_{i,j},
  \end{align*}
  where $\delta_{i,j}$ is the Kronecker delta. So
  $\{\overline{\bx}^1,\ldots,\overline{\bx}^k\}$ is an orthogonal set and, since
  each $\overline{\bx}^i$ is easily seen to be nonzero, it follows that it
  is a linearly independent set. To prove the analogous result for the
  $\overline{\by}^i$ we first note that for each $\by^i$, there exists a
  $\bu\in U(n)$ such that $\by^i = \bu\bu$. We also note that 
  \begin{equation*}
    \ell_i = \sum_{j=1}^n \by^i_{jj} = \sum_{j=1}^n (\bu_j)^2 = \|\bu\|^2 = \sqrt{\|\bu\|^2\|\bu\|^2}
    = \sqrt{\sum_{a=1}^n u_a^2 \sum_{b=1}^n u_b^2} = \sqrt{\sum_{a,b=1}^n (u_au_b)^2} = \|\by^i\|.
  \end{equation*}
  Hence,
  \begin{align*}
    \overline{\by}^i\cdot \overline{\by}^j &= \left(\sum_{a=1}^{k'} \frac{\ell}{\ell_a} u_a^{k';i} \by^a\right)\cdot
    \left(\sum_{b=1}^{k'} \frac{\ell}{\ell_b} u_b^{k';j} \by^b\right)\\
    &= \sum_{a=1}^{k'} \frac{\ell^2}{\ell_a^2}u_a^{k';i}u_a^{k';j} \|\by^a\|^2
    = \ell^2\sum_{a=1}^{k'} u_a^{k';i}u_a^{k';j} = \ell^2 (\bu^{k';i}\cdot \bu^{k';j}) = \ell^2\delta_{i,j}.
  \end{align*}
\end{proof}

\begin{example}
  For $n=3$,
  \begin{equation*}
    \msvec_3 = \langle \bv{1}{2},\bv{2}{1}\rangle = 
    \left\langle \begin{bmatrix}
      1 & 0 & -1\\
      -2 & 0 & 2\\
      1 & 0 & -1
    \end{bmatrix},
    \begin{bmatrix}
      1 & -2 & 1\\
      0 & 0 & 0\\
      -1 & 2 & -1
    \end{bmatrix}\right\rangle.
  \end{equation*}
\end{example}

\begin{example}
  Consider $n=6$. We have
  \begin{equation*}
    U(6) = \{(1,-2,1,1,-2,1), (1,0,-1,-1,0,1), (1,0,0,0,0,-1), (0,0,1,-1,0,0), (0,1,0,0,-1,0)\}.
  \end{equation*}
  So $\bu^1$ and $\bu^2$ are symmetric while $\bu^3$, $\bu^4$ and $\bu^5$ are skew-symmetric.
  It follows that
  {\small
  \begin{equation*}
    \bx^1 = 
    \begin{bmatrix}
        1 & 0 & 0 & 0 & 0 & -1\\
        0 & 0 & 0 & 0 & 0 & 0\\
        0 & 0 & 0 & 0 & 0 & 0\\
        0 & 0 & 0 & 0 & 0 & 0\\
        0 & 0 & 0 & 0 & 0 & 0\\
        1 & 0 & 0 & 0 & 0 & -1
    \end{bmatrix}, \quad 
    \bx^2 = 
    \begin{bmatrix}
        0 & 0 & 0 & 0 & 0 & 0\\
        0 & 0 & 0 & 0 & 0 & 0\\
        0 & 0 & 1 & -1 & 0 & 0\\
        0 & 0 & -1 & 1 & 0 & 0\\
        0 & 0 & 0 & 0 & 0 & 0\\
        0 & 0 & 0 & 0 & 0 & 0
    \end{bmatrix}, \quad \text{and}\quad 
    \bx^3 = 
    \begin{bmatrix}
        0 & 0 & 0 & 0 & 0 & 0\\
        0 & 1 & 0 & 0 & -1 & 0\\
        0 & 0 & 0 & 0 & 0 & 0\\
        0 & 0 & 0 & 0 & 0 & 0\\
        0 & -1 & 0 & 0 & 1 & 0\\
        0 & 0 & 0 & 0 & 0 & 0
    \end{bmatrix} 
  \end{equation*}
  }
  and 
  \begin{equation*}
  {\small
    \by^1 = 
    \begin{bmatrix}
      1 & -2 & 1 & 1 & -2 & 1\\
      -2 & 4 & -2 & -2 & 4 & -2\\
      1 & -2 & 1 & 1 & -2 & 1\\
      1 & -2 & 1 & 1 & -2 & 1\\
      -2 & 4 & -2 & -2 & 4 & -2\\
      1 & -2 & 1 & 1 & -2 & 1
    \end{bmatrix}
    \quad \text{and}\quad  
    \by^2 = 
    \begin{bmatrix}
      1 & 0 & -1 & -1 & 0 & 1\\
      0 & 0 & 0 & 0 & 0 & 0\\
      -1 & 0 & 1 & 1 & 0 & -1\\
      -1 & 0 & 1 & 1 & 0 & -1\\
      0 & 0 & 0 & 0 & 0 & 0\\
      1 & 0 & -1 & -1 & 0 & 1
    \end{bmatrix}.
  }
  \end{equation*}
  We find from considering $\bu^{3;1} = (1,-2,1)$ and $\bu^{3;2}=(1,0,-1)$ that 
 \begin{equation*}
   {\small
   \overline{\bx}^1 = \bx^1-2\bx^2+\bx^3 = 
   \begin{bmatrix}
     1 &  0 & 0 & 0 & 0 &-1\\
     0 &  1 & 0 & 0 & -1 & 0\\
     0 &  0 & -2 & 2 & 0 & 0\\
     0 &  0 & 2 & -2 & 0 & 0\\
     0 &  -1 & 0 & 0 & 1 & 0\\
     -1 &  0 & 0 & 0 & 0 & 1
   \end{bmatrix}, 
   \overline{\bx}^2 = \bx^1 - \bx^3
   \begin{bmatrix}
     1 &  0 & 0 & 0 & 0 &-1\\
     0 &  -1 & 0 & 0 & 1 & 0\\
     0 &  0 & 0 & 0 & 0 & 0\\
     0 &  0 & 0 & 0 & 0 & 0\\
     0 &  1 & 0 & 0 & -1 & 0\\
     -1 &  0 & 0 & 0 & 0 & 1
   \end{bmatrix}.
   }
 \end{equation*}
 Similarly, after computing $\ell_1 = 12$, $\ell_2 = 4$, $\ell =
 \lcm(12,4)=12$ and $\bu^{2;1} = (1,-1)$, we find that 
\begin{equation*}
  {\small
   \overline{y}^1 = \frac{12}{12}\cdot 1\cdot \by^1 + \frac{12}{4}\cdot (-1)\cdot \by^2 =
   \begin{bmatrix}
     -2 & -2 & 4 & 4 &-2 &-2\\
     -2 &  4 &-2 &-2 & 4 &-2\\
     4 & -2 &-2 &-2 &-2 & 4\\
     4 & -2 &-2 &-2 &-2 & 4\\
     -2 &  4 &-2 &-2 & 4 &-2\\
     -2 & -2 & 4 & 4 &-2 &-2
   \end{bmatrix}.
  }
 \end{equation*}
\end{example}

\section{Sudoku}
\label{sec:sudoku}

Let $\sud_{n^2}$ denote the subspace of $V_{n^2,n^2}$ arising from
requiring that the $n^2$ $n\times n$ submatrices that tile an
$n^2\times n^2$ matrix all sum to zero. Zeroed Sudoku boards are
certain points of $\mathbb{Z}^{n^4}$ lying in $\sud_{n^2}$.

\begin{prop}\label{prop:suddim}
  The dimension of $\sud_{n^2}$ is $n^4 - (2n^2-1) - (n-1)^2 = n(n-1)^2(n+2) = n^2(n-1)^2 + 2n(n-1)^2$.
\end{prop}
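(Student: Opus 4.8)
The plan is to compute the dimension of $\sud_{n^2}$ by counting independent linear constraints, in the same style as Proposition~\ref{prop:dim}. The space $V_{n^2,n^2}$ already has dimension $(n^2-1)^2$, having accounted for the $2n^2-1$ independent row/column constraints among the $2n^2$ row and column sum equations. To $\sud_{n^2}$ we additionally impose that each of the $n^2$ $n\times n$ tiling submatrices has zero sum. So the first step is to argue that among these $n^2$ new ``box-sum'' equations, exactly $(n-1)^2$ are independent of each other and of the row/column constraints; then $\dim \sud_{n^2} = (n^2-1)^2 - (n-1)^2$, and I would finish by checking the algebraic identity $(n^2-1)^2 - (n-1)^2 = n^4 - (2n^2-1) - (n-1)^2 = n(n-1)^2(n+2)$, which is routine factoring.

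For the independence count, here is the mechanism I expect to use. Group the boxes into $n$ ``band-rows'' of $n$ boxes each (boxes sharing the same block of $n$ consecutive rows) and symmetrically into $n$ ``band-columns.'' Within a fixed band-row, the sum of the $n$ box-sums equals the sum of the corresponding $n^2$ full-row sums, which is already zero by the row constraints; so in each band-row one box-sum equation is redundant given the others in that band-row together with the row constraints. That kills (at least) $n$ of the $n^2$ box equations, leaving at most $n^2 - n = n(n-1)$. Symmetrically, using the column constraints across a band-column gives one more redundancy per band-column, i.e.\ $n$ more — but one of these overlaps (the global ``sum of all box-sums $=$ sum of all row-sums $=$ sum of all column-sums'' relation is shared), so the bookkeeping needs care. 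The cleanest route is: show the box-sum equations, modulo the span of the row- and column-sum equations, are spanned by those indexed by boxes in the first $n-1$ band-rows and first $n-1$ band-columns — an $(n-1)^2$ array of representatives — and then show these $(n-1)^2$ are genuinely independent modulo the row/column span, e.g.\ by exhibiting, for each such representative box $(p,q)$ with $1\le p,q\le n-1$, an explicit matrix in $V_{n^2,n^2}$ (a small $\pm1$ pattern supported on four boxes, a ``box-indicator difference'') that has zero row and column sums but nonzero sum only on box $(p,q)$. Concretely, pick entries so that a plus/minus pattern confined to the rows of band-row $p$ (resp.\ $n$) and columns of band-column $q$ (resp.\ $n$) has all row and column sums zero; this is the analogue of the basis elements discussed in Section~\ref{sec:future} and gives a triangular-type pairing showing the $(n-1)^2$ constraints are independent.

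The main obstacle is precisely nailing down that the number of independent box constraints (beyond the row/column ones) is exactly $(n-1)^2$ rather than something larger or smaller — i.e.\ proving both the upper bound (enough redundancies: the band-row and band-column averaging relations, with the overlap counted once) and the lower bound (enough independence: the explicit witness matrices). Everything else — the initial $(n^2-1)^2$ from two applications of Proposition~\ref{prop:dim}'s argument, and the final polynomial identity — is bookkeeping. I would present the redundancy side first (it is short), then the witness construction, and close with the factorization $n^4-(2n^2-1)-(n-1)^2 = n^4 - 2n^2 + 1 - n^2 + 2n - 1 + (\text{regroup}) = (n-1)^2(n^2+2n) = n(n-1)^2(n+2)$, also equal to $n^2(n-1)^2 + 2n(n-1)^2$.
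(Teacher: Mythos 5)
Your proposal is correct and follows essentially the same approach as the paper: both start from $\dim V_{n^2,n^2}=(n^2-1)^2$, argue via band-row/band-column redundancies that at most $(n-1)^2$ of the $n^2$ box-sum constraints are new, and then certify that these $(n-1)^2$ are genuinely independent. The only variation is in the last step, where the paper reads off distinct pivot columns (in the lexicographic variable ordering) while you propose explicit $\pm1$ witness matrices supported on four boxes giving a biorthogonal pairing --- both work; and one small slip: ``the sum of the corresponding $n^2$ full-row sums'' should read ``the sum of the corresponding $n$ row sums,'' since a band-row comprises $n$ rows.
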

\begin{proof}
  Order the $n^4$ variables in an $n^2\times n^2$ matrix by reading
  rows from left to right, starting with the top row and working
  towards the bottom. As in the proof of Proposition~\ref{prop:dim},
  omit the condition corresponding to the first column. This leaves
  $2n^2-1$ independent row/column conditions. Consider an $n\times n$ square
  flush with the top edge. That its entries sum to zero follows from
  the conditions on the $(n-1)$ squares lying below it along with the
  conditions on its $n-1$ columns. That the condition on any $n\times
  n$ square flush with the leftmost column is redundant follows
  similarly. By removing these $2n-1$ conditions, we are left with
  $(n-1)^2$ conditions on the $n\times n$ squares. The remaining
  conditions have mutually distinct pivot columns, so must be linearly
  independent.
\end{proof}

Let $\be^i$ be the (length-$n$) vector of all zeros except for a $1$
in position $i$. Let $\boldf$ be the length $n$ vector of all $1$'s.

\begin{theorem}
  A basis for $\sud_{n^2}$ is given by
  \begin{multline}\label{eq:sudbasis}
    \{\be^i\be^j\otimes \bu^k\bu^\ell:\, 1\leq i,j\leq n,\ 1\leq k,\ell\leq n-1\}\ \cup\\
    \{\bu^i\be^j\otimes \boldf\bu^k:\, 1\leq i,k\leq n-1,\ 1\leq j\leq n\}\ \cup\\
    \{\be^j\bu^i\otimes \bu^k\boldf:\, 1\leq i,k\leq n-1,\ 1\leq j\leq n\}.
  \end{multline}
\end{theorem}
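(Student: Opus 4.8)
The plan is to verify three things: that every matrix in~\eqref{eq:sudbasis} lies in $\sud_{n^2}$; that the whole collection is an orthogonal set of nonzero matrices, hence linearly independent; and that it has exactly $\dim\sud_{n^2}$ elements. I index an $n^2\times n^2$ matrix by pairs $((i,a),(j,b))$ with $1\le i,j,a,b\le n$, where $(i,a)$ names the $a$-th row of the $i$-th block-row (and similarly for columns). Then for $n\times n$ matrices $P,Q$ the Kronecker product $P\otimes Q$ has $((i,a),(j,b))$-entry $P_{ij}Q_{ab}$, its $(i,j)$-th $n\times n$ subblock equals $P_{ij}Q$, and the three conditions defining $\sud_{n^2}$ become: all row sums of $P\otimes Q$ vanish iff all row sums of $P$ or all row sums of $Q$ vanish; all column sums vanish iff all column sums of $P$ or of $Q$ vanish; and all $n^2$ subblock sums vanish iff the sum of all entries of $Q$ vanishes or $P=0$. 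Roughly, the first family in~\eqref{eq:sudbasis} accounts for perturbations supported within a single $n\times n$ block, while the other two account for perturbations coherent across blocks.

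For membership I would apply these criteria family by family. For $\be^i\be^j\otimes\bu^k\bu^\ell$ the fine factor $\bu^k\bu^\ell$ has all row sums, all column sums, and total entry-sum equal to $0$ by Lemma~\ref{lem:ufacts}.\ref{lem:uzero}, so all three conditions hold. For $\bu^i\be^j\otimes\boldf\bu^k$, the columns of $\bu^i\be^j$ are each either a copy of $\bu^i$ or the zero vector, so all its column sums vanish, while every row of $\boldf\bu^k$ equals $\bu^k$, so its row sums and its total entry-sum vanish; again all three conditions hold. The family $\be^j\bu^i\otimes\bu^k\boldf$ is handled identically with the roles of rows and columns exchanged. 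Hence every listed matrix lies in $\sud_{n^2}$.

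For orthogonality I would use that the (Frobenius) inner product is multiplicative on Kronecker products, $(P\otimes Q)\cdot(P'\otimes Q')=(P\cdot P')(Q\cdot Q')$, together with $(\ba\bb)\cdot(\ba'\bb')=(\ba\cdot\ba')(\bb\cdot\bb')$ for outer products. Two matrices from the same family then pair to a product of inner products of the forms $\bu^r\cdot\bu^s$ (which vanishes unless $r=s$, by Proposition~\ref{prop:uorthog}, all the $\bu$'s here lying in $U(n)$), $\be^r\cdot\be^s=\delta_{rs}$, and $\boldf\cdot\boldf=n$, so the pairing is zero unless all indices coincide. Two matrices from different families always pair to zero, because the corresponding product then carries a factor $\bu^r\cdot\boldf=\sum_i u^r_i=0$ (Lemma~\ref{lem:ufacts}.\ref{lem:uzero}): a factor $\bu^k\cdot\boldf$ appears for a first-family--second-family pair, $\bu^\ell\cdot\boldf$ for first--third, and $\bu^k\cdot\boldf$ for second--third. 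Since every listed matrix is a Kronecker product of nonzero vectors and is therefore nonzero, the set~\eqref{eq:sudbasis} is orthogonal and consists of nonzero vectors, hence is linearly independent with pairwise distinct elements. Its cardinality is $n^2(n-1)^2+n(n-1)^2+n(n-1)^2=n(n-1)^2(n+2)$, which equals $\dim\sud_{n^2}$ by Proposition~\ref{prop:suddim}; therefore it is a basis.

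The one point requiring care -- the closest thing to an obstacle -- is fixing the index conventions so that $P\otimes Q$ genuinely encodes the Sudoku block decomposition, and then keeping track, for each of the three families and each of the three kinds of constraint (row sums, column sums, subblock sums), of which tensor factor supplies the vanishing sum. Everything after that is the routine inner-product bookkeeping sketched above.
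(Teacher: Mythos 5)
Your proof is correct, and while it follows the same high-level plan as the paper's (orthogonality of nonzero matrices plus the dimension count from Proposition~\ref{prop:suddim}), you streamline and slightly strengthen it in two ways. First, you state the factorization identity $(P\otimes Q)\cdot(P'\otimes Q') = (P\cdot P')(Q\cdot Q')$ (and its outer-product analogue) up front and apply it uniformly, so that every pairing collapses to a product of four scalar dot products of the form $\be^r\cdot\be^s$, $\bu^r\cdot\bu^s$, $\boldf\cdot\boldf$, or $\bu^r\cdot\boldf$; orthogonality is then read off from which zero factor appears. The paper instead works through six cases $A$--$A$ through $B$--$C$, effectively rederiving that factorization case by case and using transposition to reduce $C$--$C$ to $B$--$B$ and $A$--$C$ to $A$--$B$; your version is the cleaner, more reusable packaging of the same idea. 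Second, you explicitly verify that each listed matrix lies in $\sud_{n^2}$ before invoking the linear-independence-plus-dimension argument. The paper skips this membership step (it only remarks that the matrices are ``manifestly non-zero''), yet membership is logically required to conclude spanning from a dimension count; your addition makes the argument self-contained. Your row/column/block criteria for $P\otimes Q$ are stated as biconditionals, and I checked that the ``only if'' directions do in fact hold (e.g., if some row sum of $P$ is nonzero then all row sums of $Q$ must vanish), though only the ``if'' directions are actually used. The cardinality computation matches Proposition~\ref{prop:suddim}, so the argument is complete.
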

\begin{proof}
  For ease of reference, refer to the three sets in
  equation~\eqref{eq:sudbasis} as $A$, $B$ and $C$, respectively.
  Each vector listed is manifestly non-zero. To show linear
  independence, it therefore suffices to show that the vectors are
  pairwise orthogonal. And since the first set yields $n^2(n-1)^2$
  vectors while the second and third each yield $n(n-1)^2$, once linear
  independence is shown, that the set is spanning will follow
  automatically from our dimension count in
  Proposition~\ref{prop:suddim}.

  Consider two arbitrary, distinct vectors from
  equation~\eqref{eq:sudbasis}. Our proof of orthogonality is broken
  into six parts according to the which of the sets $A$, $B$ or $C$
  these vectors live in. For the reader's convenience we illustrate in
  equation~\eqref{eq:sud} one example matrix from each of the sets $A$,
  $B$ and $C$.

  \begin{enumerate}
    \item \emph{Both in $A$}.  Consider the dot product of
      $e^ie^j\otimes \bu^k\bu^\ell$ and $e^{i'}e^{j'}\otimes
      \bu^{k'}\bu^{\ell'}$. If $i\neq i'$ or $j\neq j'$, then each of the
      $n^4$ coordinates is $0$ for at least one of the vectors. If
      $i=i'$ and $j=j'$, then we are reduced to checking orthogonality
      in $B_{n,n}$, which we have already done in Theorem~\ref{thm:main}.
    \item \emph{Both in $B$}. Consider the dot product of
      $\bu^i\be^j\otimes \boldf\bu^k$ and $\bu^{i'}\be^{j'}\otimes
      \boldf\bu^{k'}$. If $j\neq j'$, then there are no nonzero
      entries in common, so assume $j=j'$. The presence of $\boldf$,
      as far as the dot product is concerned, simply multiplies the
      final result by $n$. So we are reduced to considering the dot
      product of $\bu^i\bu^k$ and $\bu^{i'}\bu^{k'}$. This is known to
      be $\delta_{(i,j),(i',k')}$ by Theorem~\ref{thm:main}.
    \item \emph{Both in $C$}. By taking the transpose of each
      matrix, this reduces to the previous case.

    \item \emph{One in $A$, one in $B$}. Consider the dot product
      of $\be^i\be^j\otimes \bu^k\bu^\ell$ and $\bu^{i'}\be^{j'}\otimes
      \boldf\bu^{k'}$. If $j' \neq j$ or $u^{i'}_i = 0$, then the
      result is immediately zero. Otherwise, we are reduced to
      considering the dot product of $\bu^k\bu^\ell$ and
      $\boldf\bu^{k'}$. Since the set $U(n)$ is orthogonal, the dot
      product will be zero unless $\ell = k'$. In this case, the
      result will be $n\|\bu^\ell\|^2\sum_a u^k_a = 0$.

    \item \emph{One in $A$, one in $C$}. By taking the transpose of
      each matrix, this reduces to the previous case.

    \item \emph{One in $B$, one in $C$}. Consider the dot product of
      $\bu^i\be^j\otimes \boldf\bu^k$ and $\be^{i'}\bu^{j'}\otimes
      \bu^{k'}\boldf$. The result is immediately zero if $u^i_{i'} =
      0$ or $u^{j'}_j = 0$. Otherwise, the problem reduces to the dot
      product of $\boldf\bu^k$ and $\bu^{k'}\boldf$. This is easily
      computed as
      \begin{equation*}
        \sum_{a,b} (\boldf\bu^i)_{a,b}(\bu^j\boldf)_{a,b} =
        \sum_{a,b} \bu^i_b\bu^j_a = \sum_b \bu^i_b \sum_a \bu^j_a = 0
      \end{equation*}
      by Lemma~\ref{lem:uzero}.
  \end{enumerate}
  This completes the proof.
\end{proof}

\begin{example}
  Let $n=3$. Below we illustrate a basis element arising from each of
  the three sets of equation~\eqref{eq:sudbasis}.  {\small
\begin{multline}\label{eq:sud}
  \be^1\be^2\otimes \bu^2\bu^2 = 
    \left[\begin{array}{ccc|ccc|ccc}
    & & & 1 & 0 & -1 & & & \\
    & $\Big{0}$ & & 0 & 0 & 0 & & \Big{0}& \\
    & & & -1 & 0 & 1 & & & \\\hline
    & & & & & & & & \\
    & \Big{0} & & & \Big{0} & & & \Big{0} & \\
    & & & & & & & & \\\hline
    & & & & & & & & \\
    & \Big{0} & & & \Big{0} & & & \Big{0} & \\
    & & & & & & & &
      \end{array}
        \right],
  \bu^2\be^2\otimes \boldf\bu^1 =
    \left[\begin{array}{ccc|ccc|ccc}
    & & & 1 & -2 & 1 & & & \\
    & \Big{0} & & 1 & -2 & 1 & & \Big{0} & \\
    & & & 1 & -2 & 1 & & & \\ \hline
    & & & & & & & & \\
    & \Big{0} & & & \Big{0} & & & \Big{0} & \\
    & & & & & & & & \\ \hline
    & & & -1 & 2 & -1 & & & \\
    & \Big{0} & & -1 & 2 & -1 & & \Big{0} & \\
    & & & -1 & 2 & -1 & & & 
  \end{array}\right], \text{ and }\\
  \be^1\bu^1\otimes \bu^1\boldf =
    \left[\begin{array}{ccc|ccc|ccc}
    1 & 1 & 1 & -2 & -2 & -2 & 1 & 1 & 1\\
    -2 & -2 & -2 & 4 & 4 & 4 & -2 & -2 & -2\\
    1 & 1 & 1 & -2 & -2 & -2 & 1 & 1 & 1\\ \hline
    & & & & & & & & \\
    & \Big{0} & & & \Big{0} & & & \Big{0} & \\
    & & & & & & & & \\ \hline
    & & & & & & & & \\
    & \Big{0} & & & \Big{0} & & & \Big{0} & \\
    & & & & & & & &
  \end{array}\right].
\end{multline}
}

\end{example}

\section{Symmetries of the bases}
\label{sec:symm}

An $n\times n$ matrix $A = (a_{ij})$ is \emph{centrosymmetric} if
$a_{ij} = a_{n-i+1,n-j+1}$ for all $i,j$. It is \emph{skew-centrosymmetric}
if $a_{ij} = -a_{n-i+1,n-j+1}$ for all $i,j$. 

\begin{lemma}
  Let 
  \begin{align*}
    \cs_n &= \{\boldv\in \lsvec_{n,n}:\, \boldv\text{ is centrosymmetric}\} \text{ and}\\
    \scs_n &= \{\boldv\in \lsvec_{n,n}:\, \boldv\text{ is skew-centrosymmetric}\}.
  \end{align*}
  Then $\lsvec_{n,n} = \cs_n \oplus \scs_n$.
\end{lemma}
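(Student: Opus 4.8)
The plan is to use the standard eigenspace splitting associated with a linear involution, valid over any field in which $2$ is invertible (here $\mathbb{R}$). Let $\rho\colon \mathbb{R}^{n\times n}\to\mathbb{R}^{n\times n}$ be the reversal map defined by $\rho(\boldv)_{ij}=v_{n-i+1,\,n-j+1}$. I would first observe that $\rho$ restricts to a linear map $\lsvec_{n,n}\to\lsvec_{n,n}$: geometrically $\rho$ simply reverses the order of the rows and, independently, reverses the order of the columns, so it permutes the list of row sums and the list of column sums of any matrix; consequently it sends a matrix with all row and column sums zero to another such matrix. Since $\rho$ is evidently linear and $\rho\circ\rho=\mathrm{id}$, it is a linear involution of the vector space $\lsvec_{n,n}$, and by the definitions of centrosymmetric and skew-centrosymmetric, its $(+1)$-eigenspace is exactly $\cs_n$ while its $(-1)$-eigenspace is exactly $\scs_n$.

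Next I would write, for an arbitrary $\boldv\in\lsvec_{n,n}$, the identity $\boldv=\tfrac12(\boldv+\rho(\boldv))+\tfrac12(\boldv-\rho(\boldv))$. Because $\lsvec_{n,n}$ is a subspace and is $\rho$-stable, both summands again lie in $\lsvec_{n,n}$; applying $\rho$ and using $\rho^2=\mathrm{id}$ shows that the first summand is fixed by $\rho$, hence lies in $\cs_n$, and the second is negated by $\rho$, hence lies in $\scs_n$. This proves $\lsvec_{n,n}=\cs_n+\scs_n$. Finally, if $\boldv\in\cs_n\cap\scs_n$ then $\rho(\boldv)=\boldv$ and $\rho(\boldv)=-\boldv$, so $\boldv=-\boldv$ and therefore $\boldv=0$; hence the sum is direct and $\lsvec_{n,n}=\cs_n\oplus\scs_n$.

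The only step that is not completely formal is checking that $\rho$ preserves $\lsvec_{n,n}$, i.e. that simultaneously reversing the row order and the column order carries the zero-row-sum and zero-column-sum conditions to themselves; this is immediate once one notes that these operations merely reorder the collection of row sums and the collection of column sums. Everything else is the textbook fact that a linear involution of a vector space over a field of characteristic $\neq 2$ decomposes it as the direct sum of its $\pm 1$-eigenspaces, so I do not anticipate any real obstacle.
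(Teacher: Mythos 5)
Your proof is correct and follows essentially the same route as the paper: the paper likewise introduces the rotation-by-$180^\circ$ involution $\theta(A)=(a_{n-i+1,n-j+1})$ and decomposes $A=\tfrac12(A+\theta(A))+\tfrac12(A-\theta(A))$ into centrosymmetric and skew-centrosymmetric parts, noting the intersection is trivial. Your additional care in checking that the involution preserves $\lsvec_{n,n}$ is a small but welcome bit of rigor that the paper leaves implicit.
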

\begin{proof}
  The proof relies on the same technique used to show that any space
  of matrices splits into symmetric and skew-symmetric parts. Define a
  ``rotation-by-180-degrees'' map $\theta:\, \lsvec_{n,n} \rightarrow
  \lsvec_{n,n}$ by sending the matrix $A = (a_{ij})\in \lsvec_{n,n}$
  to $\theta(A) = (a_{n-i+1,n-j+1})$. Then the matrix $\csop(A) = (A +
  \theta(A))/2 \in \cs_n$ and $\scsop(A) = (A-\theta(A))/2\in
  \scs_n$. Furthermore, $A = \csop(A) + \scsop(A)$ and $\cs_n \cap
  \scs_n$ is the singleton set consisting of the $n\times n$ zero
  matrix.
\end{proof}

The basis $B_{n,n}$ for $\lsvec_{n,n}$ naturally splits into
centrosymmetric and skew-centrosymmetric pieces. More precisely, the
basis vector $\bu^i\bu^j \in \cs_n$ if and only if either both $\bu^i$
and $\bu^j$ are symmetric or if neither are. We can decompose
$\lsvec_{n,n}$ further by considering the symmetric and skew-symmetric
parts of matrices. For example, we can replace each pair of basis
vectors $\{\bu^i\bu^j,\bu^j\bu^i\}$ for $i\neq j$ with the pair
\begin{equation*}
  \left\{\frac{\bu^i\bu^j+(\bu^i\bu^j)^T}{2},\frac{\bu^i\bu^j-(\bu^i\bu^j)^T}{2}\right\}.
\end{equation*}

\begin{example}
  If the above replacements are performed on $B_{3,3}$, we obtain the basis
  \begin{equation*}
  \left\{
    \begin{bmatrix}
      1 & -2 & 1\\
      -2 & 4 & -2\\
      1 & -2 & 1
    \end{bmatrix}, \quad 
    \begin{bmatrix}
      1 & -1 & 0\\
      -1 & 0 & 1\\
      0 & 1 & -1
    \end{bmatrix},  \quad 
    \begin{bmatrix}
      0 & 1 & -1\\
      -1 & 0 & 1\\
      1 & -1 & 0
    \end{bmatrix},  \quad 
    \begin{bmatrix}
      1 & 0 & -1\\
      0 & 0 & 0\\
      -1 & 0 & 1
    \end{bmatrix}
  \right\}.
  \end{equation*}
  Note that the first and last are symmetric matrices lying in
  $\cs_n$; the second and third both lie in $\scs_n$, but the second
  is symmetric while the third is skew-symmetric. The subspace of
  $\lsvec_{3,3}$ consisting of matrices that are both centrosymmetric
  and skew-symmetric is zero-dimensional. Also, note that the second
  and third basis vectors are, in fact, zeroed Latin squares.
  Figure~\ref{fig:symmpic} illustrates the analogous basis for $n=7$.
\end{example}

\begin{figure}
  \includegraphics[scale=.6]{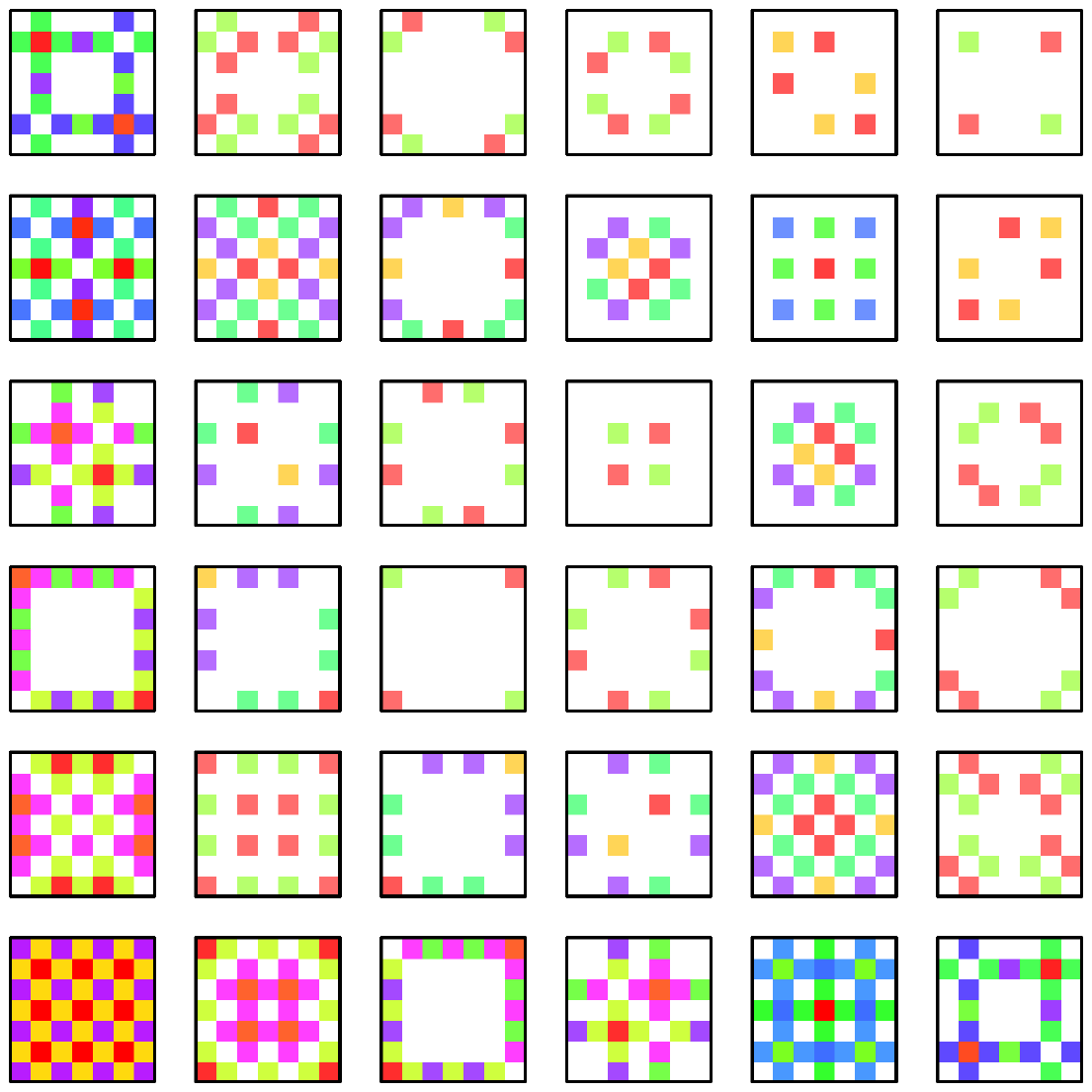}
  \caption{Basis for $\lsvec_{7,7}$ that has been decomposed in
    symmetric and skew-symmetric parts. Numbers have been replaced
    with colors in order to highlight the various symmetries.}\label{fig:symmpic}
\end{figure}

\section{Future directions}
\label{sec:future}

As mentioned in the introduction, there is a simple, non-orthogonal
basis for $\lsvec_{m,n}$. For $1\leq a\leq m-1$ and $1\leq b \leq
n-1$, let $F_{a,b} = (f_{ij})$ be the $m\times n$ matrix of all zeros
except for $f_{a,b}=f_{a+1,b+1}=1$ and $f_{a+1,b}=f_{a,b+1}=-1$. It is
trivial to see that each $F_{a,b}$ lies in $\lsvec_{m,n}$. Also, as
each of the $(m-1)(n-1)$ matrices $F_{a,b}$ has a distinct ``northwest
corner'', it follows that they are linearly independent and hence a
basis. They are not, in general, orthogonal. However, they do satisfy
the important properties required by a \emph{Markov basis}. Roughly:
Fix marginals $\br$ and $\bc$ and define a graph $G(\br,\bc)$ whose
vertices are all contingency tables in $T(\br,\bc)$. Add an edge
between two vertices differing by $\pm F_{a,b}$. The resulting graph
can be shown to be connected. It turns out that one can construct a
random ($2$-way) contingency table with given marginals by taking a
random walk on $G(\br,\bc)$.

If we try to construct an analogous Markov chain using the elements of
$B_{m,n}$ as our basis, we immediately run into a problem: Not every
contingency table is a $\mathbb{Z}$-linear combination of the elements
of $B_{m,n}$. Since the number of contingency tables with fixed
marginals is finite, we can address this by suitably rescaling the
elements of $B_{m,n}$. However, if we do this, then even if a given
$\mathbb{Z}$-linear combination lies in $T(\br,\bc)$, it might not
correspond to an actual contingency table (i.e., after performing the
shift to the affine plane, it might not have integer entries).

Nonetheless, this process might be worth further exploration. A bound
on the necessary amount of scaling can be found by expressing $B_{m,n}$ in
terms of the basis $\{F_{a,b}\}$. Using the correspondingly scaled
elements of $B_{m,n}$, a random walk could be taken and then integer
programming used to find the closest contingency table. However, it is
unclear whether the orthogonality of the $B_{m,n}$ is worth these
complications.

\begin{question}
  Is there any benefit to constructing a Markov chain based on the
  elements of the orthogonal basis $B_{m,n}$ rather than on the
  (non-orthogonal) basis $\{F_{a,b}\}$.
\end{question}

\subsection{The polytope of Latin squares}

Viewing any zeroed Latin square as a vector in $\mathbb{R}^{n^2}$, its
norm squared is the square pyramidal number $n\sum_{i=1}^n i^2 =
n^2(n^2-1)/12$. Let $\overline{B}_{n,n}$ be the orthonormal basis for
$\lsvec_{n,n}$ obtained by normalizing the elements of $B_{n,n}$. It
follows that the coordinates $(c_{ij})$ of any zeroed Latin square
with respect to $\overline{B}_{n,n}$ lie on a sphere in $\lsvec_{n,n}$
centered at the origin whose radius squared is $n^2(n^2-1)/12$.

\begin{question}
  Is there a nice characterization of the convex polytope whose
  vertices correspond to the order-$n$ Latin squares?
\end{question}

Two references pertaining to Latin squares arising in the context of
familiar polytopes are~\cite{Betten,Fisk}. In
Figure~\ref{fig:latinpoly} we illustrate the dual of the polytope for
$n=3$ as visualized by Polymake~\cite{polymake} and
POV-Ray~\cite{povray} (the dual was chosen as we found it to be less
visually confusing). Analogous questions could be asked for other
combinatorial sets such as normal magic squares or Sudoku boards.

\begin{figure}
  \includegraphics[scale=.15]{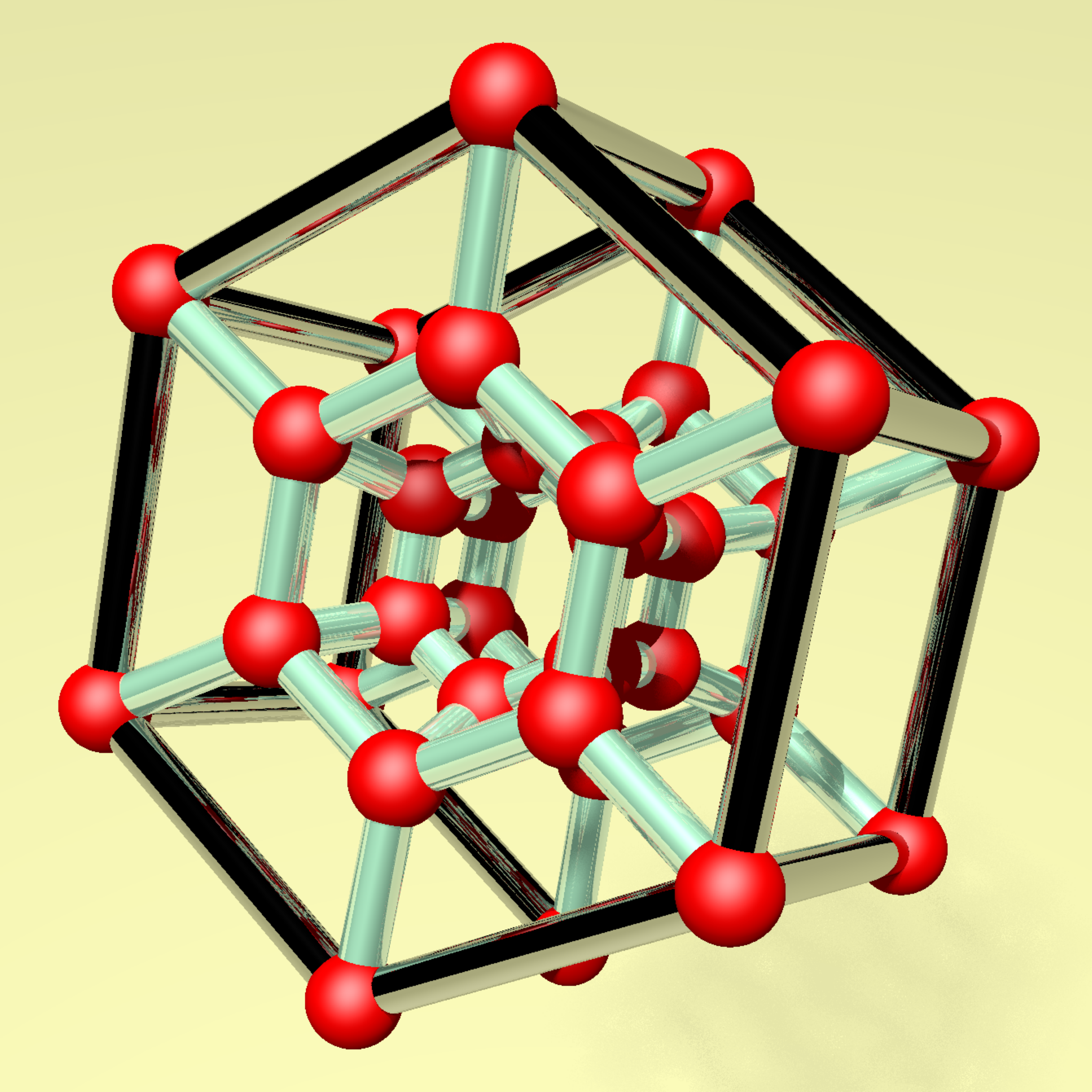}
  \caption{Visualization of dual of the polytope formed by the convex
    hull of the twelve order-$3$ Latin squares.}\label{fig:latinpoly}
\end{figure}

\subsection{Properties of the coordinates}

Given a Latin square, there are numerous transformations of it that
will lead to new Latin squares. For instance, we might rotate or
reflect the square around an appropriate axis or permute rows or
columns. Or, if there is a $2\times 2$ subarray of the form
$\begin{bmatrix}a & b\\b & a\end{bmatrix}$ (an \emph{intercalate}),
  then we can exchange the positions of these $a$'s and $b$'s to get a
  new Latin square. Or we can consider various \emph{conjugates} of
  the Latin square by permuting the triples $(i,j,x_{ij})$. It would
  be interesting to understand these operations in terms of the
  coordinates instead.

  Alternatively, we could investigate operations on coordinates that
  lead to new Latin squares. For example, notice that for the
  order-$3$ Latin squares, if we consider the coordinate vectors up to
  sign, there are only two possibilities: $(0,\pm 1/2,\pm 1/2,0)$ and
  $(\pm 1/4,\pm 1/4,\pm 1/4,\pm 3/4)$. For the 161,280 order-$5$ Latin
  squares, there are only 4,665 possibilities; each equivalence
  class has at least $16$ elements. It is not clear how these
  equivalences are related, in general, to the transformations
  considered in the previous paragraph. Hopefully further
  investigation will shed light on issues such as the observation that
  the number of Latin squares is divisible by a surprisingly high
  power of $2$ (see ~\cite{alter,mullen,mckay-wanless}).

\begin{remark}
  A Sage worksheet containing code to construct the orthogonal bases
  described in this paper can be found at the author's web
  page~\cite{sagecode}.
\end{remark}

\section*{Acknowledgments}

The author gratefully acknowledges helpful discussions with Sara
Billey, Jeff Buzas, Jeff Dinitz, John Schmitt, Dan Velleman and Jill
Warrington.

\bibliography{square-bases}{}
\bibliographystyle{hplain}

\end{document}